\newtheoremstyle{mio}%
{}{} 
{\itshape}{} 
{\bfseries}{.}{ } 
{#1 #2\thmnote{~\mdseries(#3)}} 
\theoremstyle{mio}
\newtheorem{teor}{Theorem}[section]
\newtheorem{cor}[teor]{Corollary}
\newtheorem{prop}[teor]{Proposition}
\newtheorem{lemma}[teor]{Lemma}
\newtheorem{defin}[teor]{Definition}
\newtheoremstyle{definition2}%
{}{} 
{}{} 
{\bfseries}{.}{ } 
{#1 #2\thmnote{\mdseries~ #3}} 
\theoremstyle{definition2}
\newtheorem{ex}[teor]{Example}
\newtheorem{oss}[teor]{Remark}
\newcommand{\monprinc}{\mathcal{P}}
\newcommand{\monrad}{\mathrm{Rad}}
\DeclareMathOperator{\rk}{rk}
\newcommand{\Cl}{\mathrm{Cl}}
\newcommand{\fin}{\mathrm{fin}}
\newcommand{\Inv}{\mathrm{Inv}}
\newcommand{\insid}{\mathcal{I}}
\newcommand{\pos}{\mathrm{pos}}
\newcommand{\negt}{\mathrm{neg}}
\newcommand{\insprinc}{\mathrm{Princ}}
\newcommand{\nz}{\bullet}
\newcommand{\VV}{\mathcal{V}}
\newcommand{\mprop}{\mathfrak{M}}
\title[Radicals of principal ideals and the class group]{Radicals of principal ideals and the class group of a Dedekind domain}
\author{Dario Spirito}
\date{\today}
\address{Dipartimento di Matematica e Fisica, Universit\`a degli Studi ``Roma Tre'', Roma, Italy}
\email{spirito@mat.uniroma3.it}
\subjclass[2010]{13F05, 13C20}
\keywords{Dedekind domain; class group; radical ideals; principal ideals}
\begin{document}

\begin{abstract}
For a Dedekind domain $D$, let $\monprinc(D)$ be the set of ideals of $D$ that are radical of a principal ideal. We show that, if $D,D'$ are Dedekind domains and there is an order isomorphism between $\monprinc(D)$ and $\monprinc(D')$, then the rank of the class groups of $D$ and $D'$ is the same. 
\end{abstract}

\maketitle

\section{Introduction}
The class group $\Cl(D)$ of a Dedekind domain $D$ is defined as the quotient between the group of the nonzero fractional ideals of $D$ and the subgroup of the principal ideals of $D$. Since $\Cl(D)$ is trivial if and only if $D$ is a principal ideal domain (equivalently, if and only if it is a unique factorization domain), the class group can be seen as a way to measure how much unique factorization fails in $D$. For this reason, the study of the class group is an important part of the study of Dedekind domains.

It is a non-obvious fact that the class group of $D$ actually depends only on the multiplicative structure of $D$, or, from another point of view, depends only on the set of nonzero principal ideals of $D$. Indeed, the class group of $D^\nz:=D\setminus\{0\}$ as a monoid (where the operation is the product) is isomorphic to the class group of $D$ as a Dedekind domain (see Chapter 2 -- in particular, Section 2.10 -- of \cite{ger-HK}), and thus if $D$ and $D'$ are Dedekind domains whose sets of principal ideals are isomorphic (as monoids) then the class groups of $D$ and $D'$ are isomorphic too.

In this paper, we show that the rank of $\Cl(D)$ can be recovered by considering only the set $\monprinc(D)$ of the ideals that are \emph{radical} of a principal ideal: that is, we show that if $\monprinc(D)$ and $\monprinc(D')$ are isomorphic as partially ordered sets then the ranks of $\Cl(D)$ and $\Cl(D')$ are equal. The proof of this result can be divided into two steps. 

In Section \ref{sect:coprime} we show that an order isomorphism between $\monprinc(D)$ and $\monprinc(D')$ can always be extended to an isomorphism between the sets $\monrad(D)$ and $\monrad(D')$ of all radical ideals of $D$ (Theorem \ref{teor:princ->rad}): this is accomplished by considering these sets as (non-cancellative) semigroups and characterizing coprimality in $D$ through a version of coprimality in $\monprinc(D)$ (Proposition \ref{prop:prod-coprime}).

In Section \ref{sect:ordini} we link the structure of $\monprinc(D)$ and $\monrad(D)$ with the structure of the tensor product $\Cl(D)\otimes\insQ$ as an ordered topological space; in particular, we interpret the set of inverses of a set $\Delta\subseteq\Max(D)$ with respect to $\monprinc(D)$ (see Definition \ref{def:InvP}) as the negative cone generated by the images of $\Delta$ in $\Cl(D)\otimes\insQ$ (Proposition \ref{prop:InvP}) and use this connection to calculate the rank of $\Cl(D)$ in function of some particular partitions of an ``inverse basis'' of $\Max(D)$ (Propositions \ref{prop:reaypart} and \ref{prop:corrispreay}). As this construction is invariant with respect to isomorphism, we get the main theorem (Theorem \ref{teor:rank}).

In Section \ref{sect:counterex} we give three examples, showing that some natural extensions of the main result do not hold.

\section{Notation and preliminaries}
Throughout the paper, $D$ will denote a \emph{Dedekind domain}, that is, a one-dimensional integrally closed Noetherian integral domain; equivalently, a one-dimensional Noetherian domain such that $D_P$ is a discrete valuation ring for all maximal ideals $P$. For general properties about Dedekind domains, the reader may consult, for example, \cite[Chapter 7, \textsection 2]{bourbaki_ac}, \cite[Chapter 9]{atiyah} or \cite[Chapter 1]{neukirch}.

We use $D^\nz$ to indicate the set $D\setminus\{0\}$. We denote by $\Max(D)$ the set of maximal ideals of $D$. If $I$ is an ideal of $D$, we set
\begin{equation*}
V(I):=\{P\in\Spec(D)\mid I\subseteq D\}.
\end{equation*}
If $I=xD$ is a principal ideal, we write $V(x)$ for $V(xD)$. If $I\neq(0)$, the set $V(I)$ is always a finite subset of $\Max(D)$. We denote by $\rad(I)$ the radical of the ideal $I$, and we say that $I$ is a \emph{radical ideal} (or simply that $I$ is \emph{radical}) if $I=\rad(I)$.

Every nonzero proper ideal $I$ of $D$ can be written uniquely as a product $P_1^{e_1}\cdots P_n^{e_n}=P_1^{e_1}\cap\cdots\cap P_n^{e_n}$, where $P_1,\ldots,P_n$ are distinct maximal ideals and $e_1,\ldots,e_n\geq 1$. In particular, in this case we have $V(I)=\{P_1,\ldots,P_n\}$, and $\rad(I)=P_1\cdots P_n$. An ideal is radical if and only if $e_1=\cdots=e_n=1$. If $P$ is a maximal ideal, the \emph{$P$-adic valuation} of an element $x$ is the exponent of $P$ in the factorization of $xD$; we denote it by $v_P(x)$. (If $x\notin P$, i.e., if $P$ does not appear in the factorization, then $v_P(x)=0$.)

If $P_1,\ldots,P_k$ are distinct maximal ideals and $e_1,\ldots,e_k\inN$, then by the approximation theorem for Dedekind domains (see, e.g., \cite[Chapter VII, \textsection 2, Proposition 2]{bourbaki_ac}) there is an element $x\in D$ such that $v_{P_i}(x)=e_i$ for $i=1,\ldots,k$.

A \emph{fractional ideal} of $D$ is a $D$-submodule $I$ of the quotient field $K$ of $D$ such that $xI\subseteq D$ (and thus $xI$ is an ideal of $D$) for some $x\in D^\nz$. The set $\insfracid(D)$ of nonzero fractional ideals of $D$ is a group under multiplication; the inverse of an ideal $I$ is $I^{-1}:=(D:I):=\{x\in K\mid xI\subseteq D\}$. A nonzero fractional ideal $I$ can be written uniquely as $P_1^{e_1}\cdots P_n^{e_n}$, where $P_1,\ldots,P_n$ are distinct maximal ideals and $e_1,\ldots,e_n\inZ\setminus\{0\}$ (with the empty product being equal to $D$). Thus, $\insfracid(D)$ is isomorphic to the free abelian group over $\Max(D)$. The quotient between this group and its subgroup formed by the principal fractional ideals is called the \emph{class group} of $D$, and is denoted by $\Cl(D)$.

For a set $S$, we denote by $\mathfrak{P}_\fin(S)$ the set of all finite and nonempty subsets of $S$.

\section{The two semilattices $\monprinc(D)$ and $\monrad(D)$}\label{sect:coprime}
Let $(X,\leq)$ be a \emph{meet-semilattice}, that is, a partially ordered set where every pair of elements has an infimum. Then, the operation $x\wedge y$ associating to $x$ and $y$ their infimum is associative, commutative and idempotent, and it has a unit if and only if $X$ has a maximum. The order of $X$ can also be recovered from the operation: $x\geq y$ if and only if $x$ divides $y$ in $(X,\wedge)$, that is, if and only if there is a $z\in X$ such that $y=x\wedge z$. A \emph{join-semilattice} is defined in the same way, but using the supremum instead of the infimum.

Let now $D$ be a Dedekind domain. We will be interested in two structures of this kind.

The first one is the semilattice $\monrad(D)$ of all nonzero radical ideals of $D$. In this case, the order $\leq$ is the usual containment order, while the product is equal to
\begin{equation*}
I\wedge J:=I\cap J=\rad(IJ).
\end{equation*}

The second one is the semilattice $\monprinc(D)$ of the ideals of $D$ that are radical of a nonzero, \emph{principal} ideal of $D$. This is a subsemilattice of $\monrad(D)$ since
\begin{equation*}
\rad(aD)\wedge\rad(bD)=\rad(abD),
\end{equation*}
i.e., the product of two elements of $\monprinc(D)$ remains inside $\monprinc(D)$. 

A nonzero radical ideal $I$ is characterized by the finite set $V(I)$. Hence, the map from $\monrad(D)$ to $\mathfrak{P}_\fin(\Max(D))$ sending $I$ to $V(I)$ is an order-reversing isomorphism of partially ordered sets, which becomes an order-reversing isomorphism of semilattices if the operation on the power set is the union. We denote by $\VV(D)$ the image of $\monprinc(D)$ under this isomorphism; that is, $\VV(D):=\{V(x)\mid x\in D^\nz\}$. The inverse of this map is the one sending a set $Z$ to the intersection of the prime ideals contained in $Z$.

Those semilattices have neither an absorbing element (which would be the zero ideal) nor a unit (which should be $D$ itself).

\begin{lemma}
Let $X,Y\in\mathfrak{P}_\fin(\Max(D))$ (resp., $X,Y\in\VV(D)$). Then, $X|Y$ in $\mathfrak{P}_\fin(\Max(D))$ (resp., $X|Y$ in $\VV(D)$) if and only if $X\subseteq Y$.
\end{lemma}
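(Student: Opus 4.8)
The plan is to observe that in any join-semilattice the divisibility relation simply coincides with the partial order, so the whole statement reduces to a one-line check on both sides. Concretely, under the isomorphism $I\mapsto V(I)$ the operation $I\wedge J=I\cap J=\rad(IJ)$ becomes $X\wedge Y=X\cup Y$ on $\mathfrak{P}_\fin(\Max(D))$, and by definition $X\mid Y$ means $Y=X\cup Z$ for some $Z$ in the relevant semilattice.

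For the forward implication, if $Y=X\cup Z$ then $X\subseteq Y$ immediately, no matter which semilattice $Z$ is allowed to range over. For the converse, assume $X\subseteq Y$; then $Y=X\cup Y$, so $Z:=Y$ is a witness for $X\mid Y$. Since $Y$ is by hypothesis a finite nonempty subset of $\Max(D)$ (resp.\ an element of $\VV(D)$), this $Z$ is a legitimate element of $\mathfrak{P}_\fin(\Max(D))$ (resp.\ of $\VV(D)$), so both cases are settled by exactly the same argument.

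I do not expect any real obstacle here. The only subtlety worth flagging is that passing to the subsemilattice $\VV(D)$ could in principle have failed, had the quotient $Z$ been forced to be a proper subset of $Y$ that is not of the form $V(x)$; but since $Y$ itself always serves as a quotient, no such issue arises, and in particular no appeal to the approximation theorem is needed. This lemma makes precise, for the two semilattices at hand, the general remark that the order of a semilattice is recoverable from its operation, and it will let one pass freely between containments of (radical, resp.\ radical-of-principal) ideals and divisibility statements in $\monrad(D)$ and $\monprinc(D)$.
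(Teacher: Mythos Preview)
Your proof is correct and follows exactly the same approach as the paper: the forward direction is immediate from $Y=X\cup Z$, and for the converse you take $Z:=Y$ itself as the witness, which lies in the appropriate semilattice in both cases. The additional commentary you give about why the $\VV(D)$ case needs no approximation theorem is accurate and a nice clarification, but the underlying argument is identical.
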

\begin{proof}
If $X|Y$, then $Y=X\cup Z$ for some $Z\in\VV(D)$, and thus $X\subseteq Y$. If $X\subseteq Y$, then $Y=Y\cup X$ and thus $X|Y$. (This works both in $\mathfrak{P}_\fin(\Max(D))$ and in $\VV(D)$.)
\end{proof}

\begin{defin}
Let $M$ be a commutative semigroup. We say that $a_1,\ldots,a_n\in M$ are \emph{product-coprime} if, whenever there is an $x\in M$ such that $x=a_1b_1=a_2b_2=\cdots=a_nb_n$, then for every $j$ the element $a_j$ divides $\prod_{i\neq j} b_i$.
\end{defin}

\begin{prop}\label{prop:prod-coprime}
Let $D$ be a Dedekind domain, and let $a_1,\ldots,a_n\in D^\nz$. Then, $a_1,\ldots,a_n$ are coprime in $D$ if and only if $V(a_1),\ldots,V(a_n)$ are product-coprime in $\VV(D)$.
\end{prop}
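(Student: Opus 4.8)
The plan is to transport both conditions through the order-reversing isomorphism $I\mapsto V(I)$, under which $\monrad(D)$ corresponds to $(\mathfrak{P}_\fin(\Max(D)),\cup)$ and $\monprinc(D)$ to $(\VV(D),\cup)$. On one side, $a_1,\ldots,a_n$ are coprime in $D$ precisely when $(a_1,\ldots,a_n)=D$, i.e. when $V(a_1)\cap\cdots\cap V(a_n)=\emptyset$ (no maximal ideal contains all of them). On the other side, since divisibility in $\VV(D)$ is just containment by the Lemma, the assertion ``$V(a_1),\ldots,V(a_n)$ are product-coprime in $\VV(D)$'' unwinds to: for every $X\in\VV(D)$ and all $B_1,\ldots,B_n\in\VV(D)$ with $X=V(a_i)\cup B_i$ for each $i$, and for every $j$, one has $V(a_j)\subseteq\bigcup_{i\neq j}B_i$. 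With these two reformulations in hand the proposition becomes a combinatorial statement about finite subsets of $\Max(D)$, the only non-formal ingredient being which such subsets actually belong to $\VV(D)$. (I take $n\geq 2$ throughout; $n=1$ is a trivial special case.)

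The forward implication is almost free. Assume $V(a_1)\cap\cdots\cap V(a_n)=\emptyset$, fix a configuration $X=V(a_i)\cup B_i$ and an index $j$, and take $P\in V(a_j)$. Because the $V(a_i)$ have empty total intersection there is an $i$ with $P\notin V(a_i)$, and this $i$ is $\neq j$ since $P\in V(a_j)$; then $P\in V(a_j)\subseteq X=V(a_i)\cup B_i$ together with $P\notin V(a_i)$ gives $P\in B_i\subseteq\bigcup_{k\neq j}B_k$. Hence $V(a_j)\subseteq\bigcup_{k\neq j}B_k$, so the $V(a_i)$ are product-coprime.

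For the converse I would argue by contraposition. Suppose $P\in V(a_1)\cap\cdots\cap V(a_n)$ and set $T:=V(a_1)\cup\cdots\cup V(a_n)$; the goal is to exhibit one configuration for which product-coprimality fails at $j=1$. For each $i\in\{2,\ldots,n\}$ use the approximation theorem to choose $c_i\in D^\nz$ with $v_Q(c_i)=1$ for all $Q\in T\setminus V(a_i)$ and $v_P(c_i)=0$, so that $T\setminus V(a_i)\subseteq V(c_i)$ and $P\notin V(c_i)$. Put $c:=c_2\cdots c_n$ and set
\begin{equation*}
X:=V(a_1a_2\cdots a_nc),\qquad B_1:=V(a_2\cdots a_nc),\qquad B_i:=V(c)\ \ (2\leq i\leq n);
\end{equation*}
all of these lie in $\VV(D)$. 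Clearly $V(a_1)\cup B_1=V(a_1a_2\cdots a_nc)=X$. For $i\geq 2$ we have $T\setminus V(a_i)\subseteq V(c_i)\subseteq V(c)$, so $V(a_i)\cup V(c)\supseteq V(a_i)\cup(T\setminus V(a_i))=T$ and also $V(a_i)\cup V(c)\supseteq V(c)$, whence $V(a_i)\cup V(c)\supseteq T\cup V(c)=X$; the reverse inclusion being obvious, $V(a_i)\cup B_i=X$. Thus $X=V(a_i)\cup B_i$ for every $i$. Finally $P\notin V(c)$ because $P\notin V(c_i)$ for each $i$, so $\bigcup_{i\neq 1}B_i=V(c)$ does not contain $P$, whereas $P\in V(a_1)$; hence $V(a_1)\not\subseteq\bigcup_{i\neq 1}B_i$ and the $V(a_i)$ are not product-coprime.

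The only real obstacle is this converse direction, and the point to watch is that $\VV(D)$ is in general a \emph{proper} subsemilattice of $\mathfrak{P}_\fin(\Max(D))$ (for instance a singleton $\{P\}$ belongs to $\VV(D)$ only if some power of $P$ is principal): one therefore cannot simply take the $B_i$ to be convenient subsets of $T$ such as $T\setminus\{P\}$, but must realise them as $V$ of honest principal ideals. This is exactly what forces the use of the approximation theorem, together with the device of letting $X$ absorb the extra primes that the approximation produces outside $T$. The few boundary situations (some $a_i$ a unit, or all $V(a_i)$ equal with no exploitable relation, so that $V(c)$ could happen to be empty) are harmless and can be dispatched in a line --- e.g. by adjoining one auxiliary prime outside $T$ to each $V(c_i)$, which exists unless $D$ has only finitely many maximal ideals, in which case $D$ is a PID and the proposition is immediate.
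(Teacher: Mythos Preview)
Your forward direction is identical to the paper's. For the converse, both arguments aim at the same target --- a configuration $X=V(a_i)\cup B_i$ with $P\in V(a_1)$ but $P\notin\bigcup_{i\neq 1}B_i$ --- yet the constructions differ. The paper first replaces each $a_i$ by a suitable power so that the $P$-adic valuations $v_P(a_i)$ all equal a common $t$, then uses prime avoidance to pick a \emph{single} auxiliary $b_1\in D\setminus P$ with $v_Q(b_1)\geq v_Q(a_i)$ for all $i>1$ and all $Q\neq P$; setting $x:=a_1b_1$, each $a_i$ divides $x$ in $D$, and the cofactors $b_i:=x/a_i$ automatically have $v_P(b_i)=0$, giving $B_i:=V(b_i)$ with $P\notin B_i$. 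Your route is more set-theoretic: you fix $B_i=V(c)$ for all $i\geq 2$ in advance and use the approximation theorem to manufacture $c=c_2\cdots c_n$ so that $V(c)\supseteq T\setminus V(a_i)$ for each $i$ while $P\notin V(c)$. The paper's ``pass to powers'' trick lets it work entirely through honest divisibility in $D$ with one auxiliary element; your version avoids that normalisation at the cost of one auxiliary element per index $i\geq 2$, but makes the combinatorics of the $V$-sets more transparent. Both approaches share the boundary issue you flag at the end (the constructed cofactors may turn out to be units, yielding empty $V$-sets), which the paper leaves implicit; your suggested fix of adjoining an auxiliary prime outside $T$ is exactly what is needed in either argument, though note that the residual DVR case is not quite ``immediate'' unless one allows $\emptyset\in\VV(D)$.
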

\begin{proof}
Suppose that $a_1,\ldots,a_n$ are coprime, and let $X\in\VV(D)$ be such that $X=V(a_1)\cup B_1=\cdots=V(a_1)\cup B_n$ for some $B_1,\ldots,B_n\in\VV(D)$. By symmetry, it is enough to prove that $V(a_1)$ divides $B_2\cup\cdots\cup B_n$ in $\VV(D)$, i.e., that $V(a_1)\subseteq B_2\cup\cdots\cup B_n$. Take any prime ideal $P\in V(a_1)$: since $a_1,\ldots,a_n$ are coprime there is a $j$ such that $P\notin V(a_j)$. However, $P\in V(a_j)\cup B_j$, and thus $P\in B_j$. Therefore, $V(a_i)\subseteq B_2\cup\cdots\cup B_n$, as claimed.

Conversely, suppose $V(a_1),\ldots,V(a_n)$ are product-coprime, and suppose that $a_1,\ldots,a_n$ are not coprime. Then, there is a prime ideal $P$ containing all $a_i$; passing to powers, without loss of generality we can suppose that the $P$-adic valuation of the $a_i$ is the same, say $v_P(a_i)=t$ for every $i$. By prime avoidance, there is a $b_1\in D\setminus P$ such that $v_Q(b_1)\geq v_Q(a_i)$ for all $i>1$ and all $Q\neq P$. Let $x:=a_1b_1$. By construction, $a_i|x$ for each $i$, and thus we can find $b_2,\ldots,b_n\in D$ such that $x=a_ib_i$. Therefore, $V(x)=V(a_i)\cup V(b_i)$ for every $i$; by hypothesis, it follows that $V(a_1)$ divides $V(b_2)\cup\cdots\cup V(b_n)$, i.e., that $V(a_i)\subseteq V(b_2)\cup\cdots\cup V(b_n)$. However, $v_P(x)=v_P(a_1)+v_P(b_1)=t$, and thus $v_P(b_i)=0$ for every $i$; in particular, $P\notin V(b_i)$ for every $i$. This is a contradiction, and thus $a_1,\ldots,a_n$ are coprime.
\end{proof}

\begin{defin}
Let $M$ be a commutative semigroup. We say that $I\subsetneq M$ is \emph{product-proper} if no finite subset of $I$ is product-coprime. We denote the set of maximal product-proper subsets of $M$ by $\mprop(M)$.
\end{defin}

\begin{prop}\label{prop:full}
Let $D$ be a Dedekind domain. The maps
\begin{equation*}
\begin{aligned}
\nu\colon\Max(D) & \longrightarrow\mprop(\VV(D)),\\
P & \longmapsto\{V(x)\mid x\in P\}
\end{aligned}
\end{equation*}
and
\begin{equation*}
\begin{aligned}
\theta\colon\mprop(\VV(D)) & \longrightarrow\Max(D),\\
\mathcal{Y} & \longmapsto\{x\in D\mid V(x)\in \mathcal{Y}\}
\end{aligned}
\end{equation*}
are bijections, inverse one of each other.
\end{prop}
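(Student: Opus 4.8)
First I would analyze the structure of product-proper subsets of $\VV(D)$. Using Proposition~\ref{prop:prod-coprime}, a finite set $V(a_1),\dots,V(a_n)$ fails to be product-coprime exactly when $a_1,\dots,a_n$ are not coprime, i.e.\ when there is a maximal ideal $P$ with $a_1,\dots,a_n\in P$, equivalently $P\in V(a_1)\cap\cdots\cap V(a_n)$. So a finite subset $\{V(a_1),\dots,V(a_n)\}\subseteq\VV(D)$ is product-proper-as-a-set (i.e.\ not product-coprime) iff $\bigcap_i V(a_i)\neq\emptyset$. Consequently, a subset $\mathcal{Y}\subseteq\VV(D)$ is product-proper iff every finite subfamily of $\mathcal Y$ has nonempty common intersection; since the $V(x)$ are finite sets, a compactness/finite-intersection argument shows this is equivalent to $\bigcap_{Z\in\mathcal Y}Z\neq\emptyset$. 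Thus the product-proper subsets of $\VV(D)$ are exactly the sets of the form $\{V(x)\mid P\in V(x)\}=\{V(x)\mid x\in P\}$ together with their subsets, for $P$ ranging over $\Max(D)$ --- wait, more precisely, the product-proper subsets are those $\mathcal Y$ with $\bigcap\mathcal Y\ni P$ for some $P$, i.e.\ $\mathcal Y\subseteq\nu(P)$ for some $P$.

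Next I would identify the \emph{maximal} product-proper subsets. From the previous step, $\mathcal Y$ is product-proper iff $\mathcal Y\subseteq\nu(P)$ for some $P\in\Max(D)$ (one must also check $\nu(P)\subsetneq\VV(D)$, which holds because, by the approximation theorem, there is an element $x$ with $v_P(x)=0$ and $V(x)\neq\emptyset$, so $V(x)\notin\nu(P)$; indeed $\nu(P)$ omits every $V(x)$ with $P\notin V(x)$). So the maximal product-proper subsets are precisely the maximal elements among the $\nu(P)$. One then checks $\nu$ is injective and that its image sets are pairwise incomparable: if $P\neq Q$, the approximation theorem gives $x$ with $v_P(x)\geq 1$, $v_Q(x)=0$, so $V(x)\in\nu(P)\setminus\nu(Q)$, and symmetrically, hence $\nu(P)\not\subseteq\nu(Q)$. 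Therefore every $\nu(P)$ is maximal, $\nu$ lands in $\mprop(\VV(D))$, and it is injective; and every element of $\mprop(\VV(D))$, being product-proper, is contained in some $\nu(P)$ and hence (by maximality and the incomparability just shown) equals $\nu(P)$. So $\nu$ is a bijection onto $\mprop(\VV(D))$.

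Finally I would verify $\theta$ is the inverse of $\nu$. For $P\in\Max(D)$, $\theta(\nu(P))=\{x\in D\mid V(x)\in\nu(P)\}=\{x\in D\mid P\in V(x)\}=\{x\in D\mid x\in P\}=P$, using that $V(0)$ is handled by convention (or one restricts to $D^\nz$ and notes $0\in P$ trivially) --- so $\theta\circ\nu=\mathrm{id}$. Since $\nu$ is already a bijection, this forces $\theta=\nu^{-1}$ on $\mprop(\VV(D))$, in particular $\theta$ is well-defined (its values are genuinely maximal ideals) and a bijection. \textbf{The main obstacle} is the compactness step: translating ``no finite subset of $\mathcal Y$ is product-coprime'' into ``$\bigcap\mathcal Y\neq\emptyset$'' requires knowing that an infinite family of the finite sets $V(x)$ with the finite intersection property has nonempty total intersection --- this is immediate since each $V(x)$ is finite, but it is the one place where the combinatorics must be handled with slight care rather than by a direct one-line reduction.
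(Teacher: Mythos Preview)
Your proof is correct and follows essentially the same overall arc as the paper's, but you organize the argument differently. The paper verifies directly that each $\nu(P)$ is maximal product-proper (by exhibiting, for any $V(b)\notin\nu(P)$, a single $V(a)\in\nu(P)$ with $a,b$ coprime), and then shows $\theta$ is well-defined by a case split: either $\theta(\mathcal Y)$ sits inside some prime $P$, forcing $\mathcal Y=\nu(P)$, or it does not, in which case one manufactures a product-coprime finite subset of $\mathcal Y$. You instead first prove the global structural fact that a subset $\mathcal Y\subseteq\VV(D)$ is product-proper iff $\bigcap\mathcal Y\neq\emptyset$ (equivalently, $\mathcal Y\subseteq\nu(P)$ for some $P$), using the finite-intersection/compactness step, and then read off everything at once: the maximal product-proper sets are exactly the $\nu(P)$ because the $\nu(P)$ are pairwise incomparable. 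Your route gives a cleaner description of \emph{all} product-proper subsets, not just the maximal ones, at the cost of the small compactness lemma (which, as you note, is immediate since each $V(x)$ is finite: fix $Z_0\in\mathcal Y$ and for each $P\in Z_0$ not in $\bigcap\mathcal Y$ choose a witness $Z_P\in\mathcal Y$ missing $P$; then $Z_0$ together with the $Z_P$ is a finite subfamily with empty intersection). The paper's route avoids stating that lemma by working element-by-element with prime avoidance. Both are short; yours is slightly more conceptual, the paper's slightly more hands-on.
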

\begin{proof}
We first show that $\nu$ and $\theta$ are well-defined.

If $P$ is a maximal ideal of $D$, then $P\in X$ for every $X\in\nu(P)$; thus, if $V(a)\in\nu(P)$ then $a\in P$ and $\nu(P)$ is product-proper. If $\nu(P)\subsetneq\mathcal{Y}\subseteq\VV(D)$, take $Y\in\mathcal{Y}\setminus\nu(P)$: then, $Y=V(b)$ for some $b\notin P$. If $Y=\{Q_1,\ldots,Q_k\}$, by prime avoidance we can find $a\in P\setminus(Q_1\cup\cdots\cup Q_k)$; then, $a$ and $b$ are coprime and thus $V(a)$ and $V(b)$ are product-coprime. Hence, $\nu(P)$ is a maximal product-proper subset of $\VV(D)$.

Conversely, let $\mathcal{Y}\in\mprop(\VV(D))$. If $\theta(\mathcal{Y})$ is contained in some prime ideal $P$, then $\mathcal{Y}\subseteq\nu(P)$, and thus we must have $\mathcal{Y}=\nu(P)$; in particular, $\theta(\mathcal{Y})=P\in\Max(D)$. If $\theta(\mathcal{Y})$ is not contained in any prime ideal, let $V(a)=\{Q_1,\ldots,Q_k\}\in\mathcal{Y}$. Since $\theta(\mathcal{Y})\nsubseteq Q_i$, for every $i$ we can find $b_i\notin Q_i$ such that $V(a_i)\in\mathcal{Y}$; then, $a,b_1,\ldots,b_n$ are coprime and thus $V(a),V(b_1),\ldots,V(b_n)$ are a product-coprime subset of $\mathcal{Y}$, a contradiction. Hence $\mathcal{Y}=\nu(P)$.

The fact that they are inverses one of each other follows similarly.
\end{proof}

\begin{teor}\label{teor:princ->rad}
Let $D,D'$ be Dedekind domains. If there is an order isomorphism $\psi:\monprinc(D)\longrightarrow\monprinc(D)$, then there is an order isomorphism $\Psi:\monrad(D)\longrightarrow\monrad(D')$ extending $\psi$.
\end{teor}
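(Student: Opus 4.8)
The plan is to move to the combinatorial incarnations of the two semilattices and to use that $\Max(D)$ can be reconstructed, purely order-theoretically, from $\monprinc(D)$. Recall that $I\mapsto V(I)$ is an order-reversing isomorphism of $\monrad(D)$ onto $\mathfrak{P}_\fin(\Max(D))$ which restricts to an isomorphism of $\monprinc(D)$ onto $\VV(D)$, turning intersection of ideals into union of sets; the primed versions do the same for $D'$. Since $\monprinc(D)$ is a subsemilattice of $\monrad(D)$, the given order isomorphism $\psi$ is automatically an isomorphism of semilattices, and conjugating it by the two $V(-)$ maps produces an isomorphism of join-semilattices $\bar\psi\colon\VV(D)\to\VV(D')$ with the defining property $\bar\psi(V(I))=V(\psi(I))$ for $I\in\monprinc(D)$. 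It will then suffice to extend $\bar\psi$ to an order isomorphism $\mathfrak{P}_\fin(\Max(D))\to\mathfrak{P}_\fin(\Max(D'))$ and transport it back through the $V(-)$ maps.

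First I would note that \emph{product-coprimality} and \emph{product-properness}, and hence the property of being a maximal product-proper subset, are formulated entirely in terms of the semigroup operation; therefore the semigroup isomorphism $\bar\psi$ carries $\mprop(\VV(D))$ bijectively onto $\mprop(\VV(D'))$ by taking images. Composing with the mutually inverse bijections $\nu,\theta$ of Proposition \ref{prop:full} (and $\nu',\theta'$ for $D'$) gives a bijection $\sigma\colon\Max(D)\to\Max(D')$, $\sigma:=\theta'\circ\bar\psi\circ\nu$, and unwinding the definitions yields the identity $\nu'(\sigma(P))=\bar\psi(\nu(P))$ of subsets of $\VV(D')$, for every $P\in\Max(D)$.

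The step I expect to carry the weight is identifying $\bar\psi$ with the ``pushforward along $\sigma$''. Using that, for $X\in\VV(D)$, one has $X\in\nu(P)\iff P\in X$ (and similarly for $\nu'$), the identity above unwinds to
\begin{equation*}
P\in X\quad\Longleftrightarrow\quad\sigma(P)\in\bar\psi(X)\qquad(P\in\Max(D),\ X\in\VV(D)).
\end{equation*}
Since $\sigma$ is a bijection onto $\Max(D')$ and $\bar\psi(X)$ is a finite subset of $\Max(D')$, this says exactly that $\bar\psi(X)=\{\sigma(P)\mid P\in X\}$.

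It remains to observe that $\sigma$ extends to a bijection $\Sigma\colon\mathfrak{P}_\fin(\Max(D))\to\mathfrak{P}_\fin(\Max(D'))$, $Z\mapsto\{\sigma(P)\mid P\in Z\}$, which preserves and reflects inclusion and is therefore an order isomorphism; by the previous paragraph $\Sigma$ restricts to $\bar\psi$ on $\VV(D)$. Transporting $\Sigma$ back through the order-reversing isomorphisms $\monrad(D)\cong\mathfrak{P}_\fin(\Max(D))$ and $\monrad(D')\cong\mathfrak{P}_\fin(\Max(D'))$ gives an order isomorphism $\Psi\colon\monrad(D)\to\monrad(D')$, and the equality $\bar\psi=\Sigma|_{\VV(D)}$ becomes, after applying the inverses of the $V(-)$ maps, the equality $\Psi|_{\monprinc(D)}=\psi$. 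Apart from the displayed equivalence, everything is routine manipulation of the isomorphisms already established.
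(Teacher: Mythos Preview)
Your proof is correct and follows essentially the same route as the paper: translate to $\VV(D)$, use Proposition~\ref{prop:full} to produce a bijection $\sigma\colon\Max(D)\to\Max(D')$ from the induced bijection on $\mprop$, and then verify that $\bar\psi$ agrees with the pushforward $X\mapsto\sigma(X)$. The only cosmetic difference is that the paper phrases the verification step as ``the maximal product-proper subsets of $\VV(D)$ containing $X$ are exactly the $\nu(P_i)$ for $P_i\in X$'', whereas you unpack this into the pointwise equivalence $P\in X\iff X\in\nu(P)\iff\sigma(P)\in\bar\psi(X)$; these are the same observation.
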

\begin{proof}
The statement is equivalent to saying that any isomorphism $\phi:\VV(D)\longrightarrow\VV(D')$ can be extended to an isomorphism $\Phi:\mathfrak{P}_\fin(\Max(D))\longrightarrow\mathfrak{P}_\fin(\Max(D'))$. For simplicity, let $\mathfrak{P}:=\mathfrak{P}_\fin(\Max(D))$ and $\mathfrak{P}':=\mathfrak{P}_\fin(\Max(D'))$.

If $\phi$ is an isomorphism, then it sends product-proper sets into product-proper sets, and thus $\phi$ induces a bijective map $\eta_1:\mprop(\VV(D))\longrightarrow\mprop(\VV(D'))$. Using the map $\theta$ of Proposition \ref{prop:full}, $\eta_1$ induces a bijection $\eta:\Max(D)\longrightarrow\Max(D')$, such that the diagram
\begin{equation*}
\begin{tikzcd}
\mprop(\VV(D))\arrow{d}{\eta_1}\arrow{r}{\theta} & \Max(D)\arrow{d}{\eta}\\
\mprop(\VV(D'))\arrow{r}{\theta'} & \Max(D)
\end{tikzcd}
\end{equation*}
commutes (explicitly, $\eta=\theta'\circ\eta_1\circ\theta^{-1}$). In particular, $\eta$ induces an order isomorphism $\Phi$ between $\mathfrak{P}$ and $\mathfrak{P}'$, sending $X\subseteq\Max(D)$ to $\eta(X)\subseteq\Max(D')$. To conclude the proof, we need to show that $\Phi$ extends $\phi$.

Let $X=\{P_1,\ldots,P_k\}\in\VV(D)$. Then, by definition, $\Phi(X)=\eta(X)=\{\eta(P_1),\ldots,\eta(P_k)\}$. The maximal product-proper subsets of $\VV(D)$ containing $X$ are $\mathcal{Y}_i:=\nu(P_i)$, for $i=1,\ldots,k$; since $\phi$ is an isomorphism, the maximal product-proper subsets of $\VV(D')$ containing $\phi(X)$ are the sets $\phi(\mathcal{Y}_i)$. By construction, $\phi(\mathcal{Y}_i)=\eta_1(\mathcal{Y}_i)$; however, $\theta'(\eta_1(\mathcal{Y}_i))=\eta(P_i)$, and thus $\eta(X)=\{\phi(\mathcal{Y}_1),\ldots,\phi(\mathcal{Y}_k)\}=\phi(X)$. Thus, $\Phi$ extends $\phi$, as claimed.
\end{proof}

The following corollary was obtained, with a more \emph{ad hoc} reasoning, in the proof of \cite[Theorem 2.6]{golomb-almcyc}.
\begin{cor}\label{cor:rk0}
Let $D,D'$ be Dedekind domains such that $\monprinc(D)$ and $\monprinc(D')$ are order-isomorphic. Then, $\Cl(D)$ is torsion if and only if $\Cl(D')$ is torsion.
\end{cor}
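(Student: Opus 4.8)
The plan is to reduce the torsion property of $\Cl(D)$ to an intrinsic property of the pair of semilattices, namely the equality $\monprinc(D)=\monrad(D)$, and then to transport that equality across the isomorphism using Theorem \ref{teor:princ->rad}.

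First I would establish the characterization: \emph{$\Cl(D)$ is torsion if and only if $\monprinc(D)=\monrad(D)$}. For the ``if'' direction, suppose every radical ideal is the radical of a principal ideal; then in particular each maximal ideal $P$ equals $\rad(xD)$ for some $x\in D^\nz$, i.e.\ $V(x)=\{P\}$, which forces $xD=P^{v_P(x)}$ with $v_P(x)\geq 1$. Hence every generator $[P]$ of $\Cl(D)$ is torsion, and since $\Cl(D)$ is abelian it is torsion. For the ``only if'' direction, if $P^{n_P}=x_PD$ is principal for every $P\in\Max(D)$, then given a radical ideal $I=P_1\cdots P_k$ the element $x:=x_{P_1}\cdots x_{P_k}$ satisfies $V(x)=\{P_1,\ldots,P_k\}$, so $\rad(xD)=I$ and $I\in\monprinc(D)$; thus $\monrad(D)\subseteq\monprinc(D)$, and the reverse inclusion is automatic.

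Then, assuming $\Cl(D)$ is torsion and given an order isomorphism $\psi\colon\monprinc(D)\to\monprinc(D')$, I would apply Theorem \ref{teor:princ->rad} to obtain an order isomorphism $\Psi\colon\monrad(D)\to\monrad(D')$ extending $\psi$. The key observation is that, because $\monprinc(D)=\monrad(D)$, the maps $\psi$ and $\Psi$ have the same domain, so $\Psi=\psi$; consequently $\monrad(D')=\Psi(\monrad(D))=\psi(\monprinc(D))=\monprinc(D')$, and by the characterization above $\Cl(D')$ is torsion. Since the hypothesis is symmetric in $D$ and $D'$, applying the same argument to $\psi^{-1}$ yields the converse.

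I do not expect a serious obstacle: the real content lies in Theorem \ref{teor:princ->rad}. The only point requiring care is the final step, where one must notice that the extension $\Psi$ is \emph{forced} to coincide with $\psi$ once the two semilattices in $D$ agree — this is exactly the mechanism that carries the equality $\monprinc(D)=\monrad(D)$ over to $D'$.
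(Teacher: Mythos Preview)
Your proposal is correct and follows essentially the same route as the paper: both reduce the torsion property of $\Cl(D)$ to the equality $\monprinc(D)=\monrad(D)$ and then transport that equality via the extension $\Psi$ supplied by Theorem~\ref{teor:princ->rad}. The only difference is that the paper cites \cite[Theorem 3.1]{gilmer_qr} for the characterization (every prime has a principal power iff $\Cl(D)$ is torsion), whereas you spell out both directions directly; your argument for this step is fine.
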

\begin{proof}
The class group of $D$ is torsion if and only if every prime ideal has a principal power \cite[Theorem 3.1]{gilmer_qr}, and thus if and only if $\monprinc(D)=\monrad(D)$.

If $\monprinc(D)$ and $\monprinc(D')$ are isomorphic, then by Theorem \ref{teor:princ->rad} there is an isomorphism $\Phi:\monrad(D)\longrightarrow\monrad(D')$ sending $\monprinc(D)$ to $\monprinc(D')$; hence, $\monrad(D)=\monprinc(D)$ if and only if $\monrad(D')=\monprinc(D')$. Therefore, $\Cl(D)$ is torsion if and only if $\Cl(D')$ is torsion.
\end{proof}

\begin{oss}
Let $\insprinc(D)$ be the set of principal ideals of $D$ and $\insid(D)$ be the set of all ideals of $D$. 

The method used in this section can also be applied to prove the analogous result for non-radical ideals, i.e., to prove that an isomorphism $\phi:\insprinc(D)\longrightarrow\insprinc(D')$ can be extended to an isomorphism $\Phi:\insid(D)\longrightarrow\insid(D')$.

The most obvious analogue of Proposition \ref{prop:prod-coprime} does not hold, since the ideals $(a_1),\ldots,(a_n)$ may be product-coprime in $\insprinc(D)$ without $a_1,\ldots,a_n$ being coprime (for example, take $a_1=y$, $a_2=y^2$ and $a_3=y^3$, where $y$ is a prime element of $D$). However, this can be repaired: $a_1,\ldots,a_n\in D^\nz$ are coprime if and only if the ideals $(a_1)^{k_1},\ldots,(a_n)^{k_n}$ are product-coprime in $\insprinc(D)$ for every $k_1,\ldots,k_n\inN$. The proof is essentially analogous to the one given for Proposition \ref{prop:prod-coprime}.

Proposition \ref{prop:full} carries over without significant changes: the maximal product-proper subsets of $\insprinc(D)$ are in bijective correspondence with the maximal ideals of $D$. Theorem \ref{teor:princ->rad} carries over as well: the only difference is that, instead of the restricted power set $\mathfrak{P}_\fin(\Max(D))$ it is necessary to use the free abelian group generated by $\Max(D)$.

In particular, this result directly implies that if $\insprinc(D)$ and $\insprinc(D')$ are isomorphic as partially ordered sets then the class groups $\Cl(D)$ and $\Cl(D')$ are isomorphic as groups, since the class group depends exactly on which ideals are principal. This result is also a consequence of the theory of monoid factorization (see \cite{ger-HK}), of which this reasoning can be seen as a more direct (but less general) version.
\end{oss}

\section{Calculating the rank}\label{sect:ordini}
The \emph{rank} $\rk G$ of an abelian group $G$ is the dimension of the tensor product $G\otimes\insQ$ as a vector space over $\insQ$. In particular, the rank of $G$ is $0$ if and only if $G$ is a torsion group; therefore, Corollary \ref{cor:rk0} can be rephrased by saying that, if $\monprinc(D)$ and $\monprinc(D')$ are order-isomorphic, then the rank of $\Cl(D)$ is $0$ if and only if the rank of $\Cl(D')$ is $0$. In this section, we want to generalize this result by showing that $\rk\Cl(D)$ is actually determined by $\monprinc(D)$ in every case.

Let $D$ be a Dedekind domain. If $\insid(D)$ is the set of proper ideals of $D$, then the quotient from $\insfracid(D)$ to $\Cl(D)$ restricts to a map $\pi:\insid(D)\longrightarrow\Cl(D)$, which is a monoid homomorphism (i.e., $\pi(IJ)=\pi(I)\cdot\pi(J)$). Moreover, $\pi$ is surjective since the class of $I$ coincide with the class of $dI$ for every $d\in D^\nz$.

There is also a natural map $\psi_0:\Cl(D)\longrightarrow\Cl(D)\otimes\insQ$, $g\mapsto g\otimes 1$, from the class group to the $\insQ$-vector space $\Cl(D)\otimes\insQ$; the map $\psi_0$ is a group homomorphism, and its kernel is the torsion subgroup $T$ of $\Cl(D)$. By construction, the image $\mathcal{C}$ of $\psi_0$ spans $\Cl(D)\otimes\insQ$ as a $\insQ$-vector space.

Thus, we have a chain of maps
\begin{equation*}
\insid(D)\xrightarrow{~~\pi~~}\Cl(D)\xrightarrow{~~\psi_0~~}\Cl(D)\otimes\insQ;
\end{equation*}
we denote by $\psi$ the composition $\psi_0\circ\pi$.

\begin{defin}\label{def:InvP}
Let $\Delta\subseteq\Max(D)$. A maximal ideal $Q$ is an \emph{almost inverse} of $\Delta$ if there is a set $\{P_1,\ldots,P_k\}\subseteq\Delta$ (not necessarily nonempty) such that $Q\wedge P_1\wedge\cdots\wedge P_n$ belongs to $\monprinc(D)$. We denote the set of almost inverses of $\Delta$ as $\Inv(\Delta)$.
\end{defin}

Our aim is to characterize $\Inv(\Delta)$ in terms of the map $\psi$; to do so, we use the terminology of \emph{ordered topological spaces} (for which we refer the reader to, e.g., \cite{davis-poslindep}). Given a $\insQ$-vector space $V$ and a set $S\subseteq V$, the \emph{positive cone} spanned by $S$ is 
\begin{equation*}
\pos(S):=\left\{\sum_{i=1}^k\lambda_i\mathbf{v}_i\mid \lambda_i\inQ^{\geq 0},\mathbf{v}_i\in S\right\};
\end{equation*}
if $C=\pos(S)$, we say that $C$ is \emph{positively spanned} by $S$. Symmetrically, the \emph{negative cone} is $\negt(S):=-\pos(S)$.
\begin{prop}\label{prop:InvP}
Let $\Delta\subseteq\Max(D)$. Then, $\Inv(\Delta)=\psi^{-1}(\negt(\psi(\Delta)))$.
\end{prop}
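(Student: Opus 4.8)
The statement links a purely order-theoretic/multiplicative notion, $\Inv(\Delta)$, with a linear-algebraic one, the preimage of the negative cone $\negt(\psi(\Delta))$. The plan is to prove the two inclusions separately, translating the defining condition ``$Q\wedge P_1\wedge\cdots\wedge P_k\in\monprinc(D)$'' into a statement about divisor classes. The key observation is that a radical ideal $Q\cap P_1\cap\cdots\cap P_k$ lies in $\monprinc(D)$ exactly when there is a nonzero $x\in D$ with $V(x)=\{Q,P_1,\ldots,P_k\}$, i.e.\ when some principal ideal $xD=Q^{a}P_1^{b_1}\cdots P_k^{b_k}$ has all exponents $a,b_1,\ldots,b_k\geq 1$. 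Applying $\pi$ and then $\psi_0$, this forces $a\,\psi(Q)+\sum_i b_i\,\psi(P_i)=0$ in $\Cl(D)\otimes\insQ$ with $a>0$, hence $\psi(Q)=-\sum_i (b_i/a)\psi(P_i)\in\negt(\{\psi(P_1),\ldots,\psi(P_k)\})\subseteq\negt(\psi(\Delta))$. This already gives $\Inv(\Delta)\subseteq\psi^{-1}(\negt(\psi(\Delta)))$.

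For the reverse inclusion, suppose $Q$ is a maximal ideal with $\psi(Q)\in\negt(\psi(\Delta))$, so $\psi(Q)=-\sum_{i=1}^k \lambda_i\psi(P_i)$ for some $P_1,\ldots,P_k\in\Delta$ and $\lambda_i\in\insQ^{\geq 0}$. Clearing denominators, we may write $a\,\psi(Q)+\sum_i m_i\,\psi(P_i)=0$ in $\Cl(D)\otimes\insQ$ with $a\geq 1$ and $m_i\geq 0$ integers. Since the kernel of $\psi_0$ is the torsion subgroup $T$ of $\Cl(D)$, this means the class $a[Q]+\sum_i m_i[P_i]$ is torsion in $\Cl(D)$; multiplying $a$ and the $m_i$ by a suitable positive integer $N$ (the order of that torsion element), we get that $aN[Q]+\sum_i m_iN[P_i]=0$ in $\Cl(D)$ itself, i.e.\ the fractional ideal $Q^{aN}\prod_i P_i^{m_iN}$ is principal, say equal to $yD$ for some $y\in K^\times$; since all exponents are nonnegative we may take $y\in D^\nz$. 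Now $V(y)\subseteq\{Q,P_1,\ldots,P_k\}$, but some of the $m_i$ (or in degenerate cases even $Q$) might fail to appear because an exponent is zero or the prime was already among the others; reindexing, $\rad(yD)=Q\cap P_{i_1}\cap\cdots\cap P_{i_r}$ for the primes with positive exponent, which shows $Q$ is an almost inverse of $\Delta$ provided $Q$ itself occurs in $V(y)$, i.e.\ $aN\geq 1$ — which holds. Hence $Q\in\Inv(\Delta)$.

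\textbf{Main obstacle.} The delicate point is the bookkeeping in the reverse direction: passing from a rational relation $\psi(Q)=-\sum\lambda_i\psi(P_i)$ to an honest principal ideal requires clearing denominators \emph{and} killing torsion, and one must check that the resulting element $y$ can be chosen in $D^\nz$ (not just in $K^\times$) and that $Q$ genuinely belongs to $V(y)$ so that the witnessing set in Definition~\ref{def:InvP} really contains $Q$. One should also handle the trivial cases cleanly: if $\psi(Q)=0$ then $Q$ has a principal power, so $Q\in\monprinc(D)$ with the empty subset of $\Delta$, matching $0\in\negt(\psi(\Delta))$ (the empty sum); and if $\Delta=\varnothing$ then $\negt(\psi(\Delta))=\{0\}$ and $\Inv(\varnothing)$ is exactly the set of $Q$ with a principal power. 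None of this is deep, but it is where a careless argument would slip; everything else is a direct application of the homomorphism properties of $\pi$ and $\psi_0$ together with the description of radical ideals via $V(\cdot)$.
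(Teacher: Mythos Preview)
Your proof is correct and follows essentially the same route as the paper's: both directions hinge on translating ``$Q\wedge P_1\wedge\cdots\wedge P_k\in\monprinc(D)$'' into a relation $e\psi(Q)+\sum_i f_i\psi(P_i)=\mathbf{0}$ with positive integer coefficients, using the fact that $\ker\psi_0$ is the torsion subgroup to pass from a $\insQ$-linear relation back to a principal power. Your handling of the edge cases (the possibility $\psi(Q)=\mathbf{0}$, vanishing coefficients $m_i$, and $\Delta=\varnothing$) is slightly more explicit than the paper's, but the argument is the same.
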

\begin{proof}
Let $Q\in\Inv(\Delta)$, and let $P_1,\ldots,P_n\in\Delta$ be such that $L:=Q\wedge P_1\wedge\cdots\wedge P_n\in\monprinc(D)$. Then, there is a principal ideal $I=aD$ with radical $L$; thus, there are positive integers $e,f_1,\ldots,f_n>0$ such that $I=Q^eP_1^{f_1}\cdots P_n^{f_n}$ (this holds also if $Q=P_i$ for some $i$). Since $I$ is principal, $\psi(I)=\mathbf{0}$; hence, 
\begin{equation*}
\mathbf{0}=\psi(I)=\psi(Q^eP_1^{f_1}\cdots P_n^{f_n})=e\psi(Q)+\sum_{i=1}^nf_i\psi(P_i).
\end{equation*}
Solving in $\psi(Q)$, we see that $\psi(Q)=\sum_i-\frac{f_i}{e}\psi(P_i)\in\negt(\psi(\Delta))$, as claimed.

Conversely, suppose that $\psi(Q)$ is in the negative cone. Then, either $\psi(Q)=\mathbf{0}$ (in which case $Q\in\Inv(\Delta)$ by taking no $P\in\Delta$ in the definition) or we can find $P_1,\ldots,P_n\in\Delta$ and negative rational numbers $q_1,\ldots,q_n$ such that $\psi(Q)=\sum_iq_i\psi(P_i)$. By multiplying for the minimum common multiple of the denominators of the $q_i$ we obtain a relation $e\psi(Q)+\sum_if_i\psi(P_i)=\mathbf{0}$, with $e,f_i\inN^+$. If $I:=Q^eP_1^{f_1}\cdots P_n^{f_n}$, it follows that $\pi(I)$ is torsion in the class group, i.e., there is an $n>0$ such that $I^n$ is principal; thus $\rad(I^n)=\rad(I)=Q\wedge P_1\wedge\cdots\wedge P_n\in\monprinc(D)$, as claimed.
\end{proof}

\begin{cor}\label{cor:posspan}
Let $\Delta\subseteq\Max(D)$. Then, $\Inv(\Delta)=\Max(D)$ if and only if $\psi(\Delta)$ positively spans $\Cl(D)\otimes\insQ$.
\end{cor}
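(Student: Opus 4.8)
The plan is to read everything off Proposition~\ref{prop:InvP}, which already identifies $\Inv(\Delta)$ with $\psi^{-1}(\negt(\psi(\Delta)))$. So the corollary reduces to showing that $\psi^{-1}(\negt(\psi(\Delta)))=\Max(D)$ if and only if $\pos(\psi(\Delta))=\Cl(D)\otimes\insQ$. The one extra ingredient I would record first is that $\psi(\Max(D))$ spans $\Cl(D)\otimes\insQ$ as a $\insQ$-vector space: every element of $\Cl(D)$ is the class of some proper ideal, which factors as a product of maximal ideals, so the classes $\pi(P)$ with $P\in\Max(D)$ generate $\Cl(D)$ as a group, and since $\psi_0$ is a group homomorphism whose image spans $\Cl(D)\otimes\insQ$, the images $\psi(P)=\psi_0(\pi(P))$ span it too.

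For the ``if'' direction (assume $\psi(\Delta)$ positively spans $\Cl(D)\otimes\insQ$): then $\pos(\psi(\Delta))=\Cl(D)\otimes\insQ$, hence $\negt(\psi(\Delta))=-\pos(\psi(\Delta))=\Cl(D)\otimes\insQ$ as well, so $\psi^{-1}(\negt(\psi(\Delta)))=\Max(D)$, which by Proposition~\ref{prop:InvP} is exactly the statement $\Inv(\Delta)=\Max(D)$.

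For the ``only if'' direction, suppose $\Inv(\Delta)=\Max(D)$, i.e.\ $\psi(\Max(D))\subseteq\negt(\psi(\Delta))=-\pos(\psi(\Delta))$. Since $\Delta\subseteq\Max(D)$, in particular $\psi(\Delta)\subseteq-\pos(\psi(\Delta))$. Writing $C:=\pos(\psi(\Delta))$ and taking positive cones of both sides (using $\pos(-\pos(S))=-\pos(S)$ for any $S$), this gives $C\subseteq-C$, and negating the inclusion gives $-C\subseteq C$, so $C=-C$. A convex cone equal to its own negative is closed under addition and under multiplication by every scalar in $\insQ$, hence is a $\insQ$-subspace; moreover it contains $\psi(\Max(D))$, because $\psi(\Max(D))\subseteq-C=C$. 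Since $\psi(\Max(D))$ spans $\Cl(D)\otimes\insQ$, we conclude $C=\Cl(D)\otimes\insQ$, i.e.\ $\psi(\Delta)$ positively spans the whole space.

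The only step that is not pure bookkeeping — and the closest thing to an obstacle — is noticing that a cone containing a spanning set need not be all of the space (think of a quadrant in $\insQ^2$), so one really does need the self-negation identity $C=-C$ to promote $C$ from a cone to a subspace before invoking the spanning property; once that is in place the argument is a direct unwinding of Proposition~\ref{prop:InvP}.
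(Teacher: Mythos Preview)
Your proof is correct and follows essentially the same approach as the paper: both directions are read off Proposition~\ref{prop:InvP} together with the fact that $\psi(\Max(D))$ spans $\Cl(D)\otimes\insQ$. The only cosmetic difference is in the ``only if'' direction, where the paper factors an arbitrary ideal into primes to show directly that every vector lies in $\negt(\psi(\Delta))$, while you instead first deduce the cone identity $C=-C$ from $\psi(\Delta)\subseteq -C$ and then invoke spanning; the content is the same.
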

\begin{proof}
Suppose $\Inv(\Delta)=\Max(D)$, and let $\mathbf{q}\in\Cl(D)\otimes\insQ$. Since the image $\mathcal{C}$ of $\psi$ generates $\Cl(D)\otimes\insQ$ as a $\insQ$-vector space and is a subgroup, there is a $d\inN^+$ such that $d\mathbf{q}\in\mathcal{C}$. Hence, $d\mathbf{q}=\psi(I)$ for some $I\in\insid(D)$; factorize $I$ as $P_1^{e_1}\cdots P_n^{e_n}$, with $P_i\in\Max(D)$ and $e_i>0$. By Proposition \ref{prop:InvP}, we have
\begin{equation*}
\psi(I)=\sum_ie_i\psi(P_i)\in\sum_ie_i\negt(\psi(\Delta))=\negt(\psi(\Delta)),
\end{equation*}
and thus also $\mathbf{q}=\inv{d}\psi(I)\in\negt(\psi(\Delta))$. Hence, $\psi(\Delta)$ negatively spans $\Cl(D)\otimes\insQ$, and thus it also positively spans $\Cl(D)\otimes\insQ$.

Conversely, suppose $\psi(\Delta)$ positively spans $\Cl(D)\otimes\insQ$; thus, it also negatively spans $\Cl(D)\otimes\insQ$. Let $Q\in\Max(D)$: then, $\psi(Q)\in\negt(\psi(\Delta))$, so that $Q\in\Inv(\Delta)$ by Proposition \ref{prop:InvP}. Hence, $\Inv(\Delta)=\Max(D)$.
\end{proof}

We can now characterize when the rank of $\Cl(D)$ is finite.
\begin{prop}\label{prop:finbasis}
Let $D$ be a Dedekind domain. Then, $\rk\Cl(D)<\infty$ if and only if there is a finite set $\Delta\subseteq\Max(D)$ such that $\Inv(\Delta)=\Max(D)$.
\end{prop}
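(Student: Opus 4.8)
The plan is to use Corollary \ref{cor:posspan} to translate the statement into the language of the $\insQ$-vector space $V:=\Cl(D)\otimes\insQ$. Indeed, by that corollary, a finite set $\Delta\subseteq\Max(D)$ satisfies $\Inv(\Delta)=\Max(D)$ if and only if the finite set $\psi(\Delta)\subseteq V$ positively spans $V$. So the claim reduces to the purely linear-algebraic fact that $V$ has finite dimension if and only if it is positively spanned by some finite subset, together with the observation that every finite subset of $V$ is of the form $\psi(\Delta)$ for a suitable finite $\Delta\subseteq\Max(D)$.

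For the forward direction, suppose $\rk\Cl(D)=d<\infty$, so $\dim_\insQ V=d$. Pick a basis $\mathbf{v}_1,\ldots,\mathbf{v}_d$ of $V$; then $\{\pm\mathbf{v}_1,\ldots,\pm\mathbf{v}_d\}$ positively spans $V$ (any vector is a $\insQ$-linear combination of the $\mathbf{v}_i$, and each coefficient, positive or negative, is absorbed by choosing the appropriate sign). Since $\mathcal{C}=\psi_0(\Cl(D))$ spans $V$ and is a subgroup, after scaling each $\mathbf{v}_i$ by a positive integer we may assume $\mathbf{v}_i\in\mathcal{C}$, hence $\mathbf{v}_i=\psi_0(g_i)=\psi(J_i)$ for some proper ideal $J_i$ (recall $\pi$ is surjective); replacing $\mathbf{v}_i$ by $-\mathbf{v}_i$ costs nothing since we already threw in both signs. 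Now factor each $J_i$ into maximal ideals; letting $\Delta$ be the (finite) set of all maximal ideals occurring in these factorizations, one checks that $\pos(\psi(\Delta))\supseteq\pos\{\pm\mathbf{v}_1,\ldots,\pm\mathbf{v}_d\}=V$, so $\Inv(\Delta)=\Max(D)$ by Corollary \ref{cor:posspan}. The one point needing care here is that $\psi(J_i)=\sum_P v_P(J_i)\psi(P)$ is a \emph{nonnegative} combination of the $\psi(P)$, $P\in\Delta$, which only gives $\mathbf{v}_i\in\pos(\psi(\Delta))$, not $-\mathbf{v}_i$; this is why both signs must be included from the start, i.e., one should choose the $J_i$ so that the classes $\pm[\mathbf v_i]$ are all hit, or equivalently take $\Delta$ large enough that $\psi(\Delta)$ contains a spanning set closed under negation up to positive scaling.

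For the converse, suppose $\Delta\subseteq\Max(D)$ is finite with $\Inv(\Delta)=\Max(D)$. By Corollary \ref{cor:posspan}, $\psi(\Delta)$ positively spans $V$; in particular $\psi(\Delta)$ spans $V$ as a $\insQ$-vector space. Since $\psi(\Delta)$ is finite, $\dim_\insQ V\leq|\Delta|<\infty$, i.e., $\rk\Cl(D)<\infty$. This direction is immediate and is not the obstacle.

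The main obstacle — though it is more a bookkeeping subtlety than a genuine difficulty — is the sign issue in the forward direction: the map $\psi$ only ever produces \emph{positive} cones over the images of maximal ideals (because ideal exponents are positive), so to positively span all of $V$ one must ensure $\Delta$ is chosen so that $\psi(\Delta)$ already contains, up to positive scalars, a negation-closed spanning set. This is arranged by the remark that $\negt(\psi(\Delta))=\pos(\psi(\Delta))$ whenever the latter is all of $V$ (a symmetric vector space is its own negative), which is exactly the mechanism used in the proof of Corollary \ref{cor:posspan}; invoking that corollary in both directions makes the argument short.
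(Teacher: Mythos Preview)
Your approach is essentially the paper's: both directions go through Corollary~\ref{cor:posspan}, reducing the claim to the linear-algebraic fact that $V:=\Cl(D)\otimes\insQ$ is finite-dimensional if and only if some finite subset of $\psi(\Max(D))$ positively spans it. The converse is identical in spirit (the paper unpacks the definition of $\Inv$ rather than quoting the corollary, but it is the same argument).

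In the forward direction there is a minor difference in execution. The paper first notes the triviality $\Inv(\Max(D))=\Max(D)$ and applies the corollary to conclude that $\psi(\Max(D))$ positively spans $V$; then every vector of $V$ --- in particular each $\pm\mathbf{e}_i$ for a chosen basis --- lies in the positive cone of some finite subset of $\psi(\Max(D))$, and the union of these finitely many finite subsets furnishes $\Delta$. You instead lift $\pm\mathbf{v}_i$ directly to proper ideals via the surjectivity of $\pi$ and take $\Delta$ to be the primes appearing in their factorizations. This also works, and your fix --- choose ideals representing \emph{both} $\mathbf{v}_i$ and $-\mathbf{v}_i$, not just one sign --- is exactly the right repair for the sign issue you flag.

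One point of confusion: your closing paragraph's appeal to ``$\negt(\psi(\Delta))=\pos(\psi(\Delta))$ whenever the latter is all of $V$'' does not resolve anything. That identity presupposes $\pos(\psi(\Delta))=V$, which is precisely what you are trying to prove; invoking it here is circular. The sign issue is handled by your earlier fix (including ideals for both $\pm\mathbf{v}_i$), not by this remark. Drop that last sentence and the argument is clean.
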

\begin{proof}
Suppose first that $\rk\Cl(D)=n<\infty$. Then, $\Inv(\Max(D))=\Max(D)$, and thus $\psi(\Max(D))$ positively spans $\Cl(D)\otimes\insQ$, by Corollary \ref{cor:posspan}. Let $\{\mathbf{e}_1,\ldots,\mathbf{e}_n\}$ be a basis of $\Cl(D)\otimes\insQ$: then, each $\mathbf{e}_i$ belongs to the positive cone spanned by a finite subset $\Lambda_i$ of $\psi(\Max(D))$. Thus, the union $\Lambda$ of the $\Lambda_i$ is a finite set positively spanning $\Cl(D)\otimes\insQ$, so the corresponding subset $\Delta$ of $\Max(D)$ is finite and $\Inv(\Delta)=\Max(D)$ by Corollary \ref{cor:posspan}.

Conversely, suppose there is a finite set $\Delta=\{P_1,\ldots,P_k\}\subseteq\Max(D)$ such that $\Inv(\Delta)=\Max(D)$. For every $Q\in\Max(D)$, there are $i_1,\ldots,i_r$ such that $Q\wedge P_{i_1}\wedge\cdots\wedge P_{i_r}\in\monprinc(D)$; as in the proof of Proposition \ref{prop:InvP}, it follows that there are $e,f_1,\ldots,f_r>0$ such that $Q^eP_{i_1}^{f_1}\cdots P_{i_r}^{f_r}$ is principal. It follows that $[Q]\otimes 1$ belongs to the $\insQ$-vector subspace of $\Cl(D)\otimes\insQ$ generated by $P_{i_1}\otimes 1,\ldots,P_{i_r}\otimes 1$. Since $Q$ was arbitrary, the set $\{P_1\otimes 1,\ldots,P_k\otimes 1\}$ is a basis of $\Cl(D)\otimes\insQ$. In particular, $\rk\Cl(D)=\dim_\insQ\Cl(D)\otimes\insQ\leq k<\infty$.
\end{proof}

We will also need a criterion to understand when $\Inv(\Delta)$ correspond to a linear subspace.
\begin{prop}\label{prop:inv-linsub}
Let $\Delta\subseteq\Max(D)$. Then, $\negt(\psi(\Delta))$ is a linear subspace of $\Cl(D)\otimes\insQ$ if and only if $\Delta\subseteq\Inv(\Delta)$.
\end{prop}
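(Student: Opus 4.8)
The plan is to transport the condition $\Delta\subseteq\Inv(\Delta)$ into the $\insQ$-vector space $\Cl(D)\otimes\insQ$ via Proposition \ref{prop:InvP}, and then invoke an elementary fact about convex cones. Write $S:=\psi(\Delta)$ and $C:=\negt(S)$. Since $\Inv(\Delta)=\psi^{-1}(\negt(\psi(\Delta)))=\psi^{-1}(C)$ by Proposition \ref{prop:InvP}, the hypothesis $\Delta\subseteq\Inv(\Delta)$ is equivalent to $S\subseteq C$, so the whole statement reduces to: \emph{$C=\negt(S)$ is a $\insQ$-linear subspace of $\Cl(D)\otimes\insQ$ if and only if $S\subseteq C$}.

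First I would record, straight from the definition of $\pos$ and the relation $\negt(S)=-\pos(S)$, that $C$ is always a convex cone, i.e.\ it is closed under addition and under multiplication by elements of $\insQ^{\geq 0}$ (a sum, or a nonnegative rescaling, of a finite nonnegative combination of elements of $S$ is again such a combination); moreover $\mathbf{0}\in C$ whenever $S\neq\emptyset$. Then I would isolate the key observation: a nonempty convex cone $C$ in a $\insQ$-vector space is a linear subspace precisely when $-C\subseteq C$. Indeed, $-C\subseteq C$ forces closure under multiplication by negative rationals — writing $\lambda c=(-\lambda)(-c)$ with $-\lambda>0$ and $-c\in C$ — so $C$ becomes a subspace; the converse is immediate.

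With these two facts the two implications are one line each. If $C$ is a linear subspace then $S\subseteq\pos(S)=-\negt(S)=-C=C$, which is exactly $\Delta\subseteq\Inv(\Delta)$. Conversely, if $S\subseteq C$ then, $C$ being a convex cone containing $S$, it contains every finite nonnegative combination of elements of $S$, i.e.\ $\pos(S)\subseteq C$; but $\pos(S)=-\negt(S)=-C$, so $-C\subseteq C$ and $C$ is a linear subspace by the observation above.

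I do not expect a genuine obstacle here: the argument is essentially bookkeeping. The only places to be slightly careful are the sign manipulations (from $\negt(S)=-\pos(S)$ one gets $-\negt(S)=\pos(S)$) and the degenerate case $\Delta=\emptyset$, where one uses the convention $\pos(\emptyset)=\{\mathbf{0}\}$ — already implicit in Proposition \ref{prop:InvP} — so that $\negt(\emptyset)=\{\mathbf{0}\}$ is a subspace and $\emptyset\subseteq\Inv(\emptyset)$ both hold trivially.
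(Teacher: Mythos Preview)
Your argument is correct and follows essentially the same route as the paper: both reduce the statement to the cone-theoretic fact that $\negt(S)$ is a subspace if and only if $S\subseteq\negt(S)$. Your version is slightly more streamlined in that you invoke Proposition~\ref{prop:InvP} at the outset for both directions, whereas the paper, in the forward implication, unwinds the ideal-theoretic content of that proposition again by hand.
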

\begin{proof}
Suppose $\negt(\psi(\Delta))$ is a linear subspace, and let $Q\in\Delta$. Then, there are $P_i\in\Delta$, $\lambda_i\inQ^-$ such that $\psi(Q)=\sum_i\lambda_i\psi(P_i)$; multiplying by the minimum common multiple of the denominators we get an equality $e\psi(Q)+\sum_if_i\psi(P_i)=\mathbf{0}$ where $e,f_i\inN^+$. Let $I:=Q^eP_1^{f_1}\cdots P_n^{f_n}$: then, $\psi(I)=\mathbf{0}$, so that $\pi(I)$ is torsion in $\Cl(D)$, i.e., $I^n$ is principal for some $n$. Thus, $Q\wedge P_1\wedge\cdots\wedge P_n\in\monprinc(D)$, and $Q\in\Inv(\Delta)$.

Conversely, suppose $\Delta\subseteq\Inv(\Delta)$, and let $\mathbf{q}$ be an element of the linear subspace generated by $\psi(\Delta)$. Then, there are $P_i,Q_j\in\Delta$, $\theta_i\inQ^+$ and $\mu_j\in\insQ^-$ such that
\begin{equation*}
\mathbf{q}=\sum_i\theta_i\psi(P_i)+\sum_i\mu_j\psi(Q_j).
\end{equation*}
By construction, each $\theta_i\psi(P_i)$ belongs to $\pos(\psi(\Delta))$. Furthermore, each $\psi(Q_j)$ is in $\negt(\psi(\Delta))$ by Proposition \ref{prop:InvP}, and thus $\mu_j\psi(Q_j)\in\pos(\psi(\Delta))$ for every $j$. Therefore, $\mathbf{q}\in\pos(\psi(\Delta))$, so the positive cone of $\psi(\Delta)$ is a linear subspace and $\negt(\psi(\Delta))=\pos(\psi(\Delta))$ is a subspace too.
\end{proof}

Proposition \ref{prop:finbasis} can be interpreted by saying that $\rk\Cl(D)$ is finite if and only if $\Max(D)$ is ``negatively generated'' by a finite set. In the case of finite rank, we need a way to link the dimension of $\Cl(D)\otimes\insQ$ with the cardinality of the sets spanning it as a positive cone; that is, we need to consider a notion analogue to the basis of a vector space.

Since we need only to consider the case of finite rank, from now on we suppose that $n:=\rk\Cl(D)<\infty$, and we identify $\Cl(D)\otimes\insQ$ with $\insQ^n$.
\begin{defin}
A set $X\subseteq\insQ^n$ is \emph{positive basis} of $\insQ^n$ if $\pos(X)=\insQ^n$ and if $\pos(X\setminus\{x\})\neq\insQ^n$ for every $x\in X$.
\end{defin}

\begin{defin}
A subset $\Delta\subseteq\Max(D)$ is an \emph{inverse basis} of $\Max(D)$ if $\Inv(\Delta)=\Max(D)$ and $\Inv(\Delta')\neq\Max(D)$ for every $\Delta'\subsetneq\Delta$.
\end{defin}

These two notions are naturally connected.
\begin{prop}\label{prop:invbasis}
Let $\Delta\subseteq\Max(D)$. Then, $\Delta$ is an inverse basis of $\Max(D)$ if and only if $\psi(\Delta)$ is a positive basis of $\insQ^n$.
\end{prop}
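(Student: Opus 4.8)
The plan is to show that the correspondence $\Delta \leftrightarrow \psi(\Delta)$ translates ``inverse basis'' into ``positive basis'' by combining Corollary \ref{cor:posspan} (which says $\Inv(\Delta)=\Max(D)$ iff $\psi(\Delta)$ positively spans $\insQ^n$) with a spanning-versus-minimality analysis. The subtlety is that $\psi$ need not be injective (its kernel contains the torsion of $\Cl(D)$), so removing a single prime $P$ from $\Delta$ need not remove the vector $\psi(P)$ from $\psi(\Delta)$ if some other $P'\in\Delta$ has $\psi(P')=\psi(P)$. So the proof must handle this collision phenomenon carefully in both directions.

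First I would prove the forward implication. Assume $\Delta$ is an inverse basis. By Corollary \ref{cor:posspan}, $\psi(\Delta)$ positively spans $\insQ^n$. To see it is a positive basis, suppose for contradiction that for some $\mathbf{v}\in\psi(\Delta)$ the set $\psi(\Delta)\setminus\{\mathbf{v}\}$ still positively spans $\insQ^n$. Pick $P\in\Delta$ with $\psi(P)=\mathbf{v}$. If $\psi(\Delta\setminus\{P\}) = \psi(\Delta)\setminus\{\mathbf{v}\}$ (i.e.\ $P$ is the unique prime in $\Delta$ mapping to $\mathbf{v}$), then $\psi(\Delta\setminus\{P\})$ positively spans $\insQ^n$, so by Corollary \ref{cor:posspan} $\Inv(\Delta\setminus\{P\})=\Max(D)$, contradicting minimality of $\Delta$. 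Otherwise some $P'\neq P$ in $\Delta$ also has $\psi(P')=\mathbf{v}$. But then I claim $\Inv(\Delta\setminus\{P\})=\Max(D)$ anyway: in the definition of almost inverse, any occurrence of $P$ in a witnessing relation $Q\wedge P\wedge P_{i_1}\wedge\cdots\in\monprinc(D)$ can be replaced — at the level of the homomorphism $\psi$, whose vanishing (up to torsion) is what governs membership by the argument of Proposition \ref{prop:InvP} — by $P'$, since $\psi(P)=\psi(P')$; one recovers a principal power using $P'$ in place of $P$. Again this contradicts minimality. Hence $\psi(\Delta)$ is a positive basis.

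For the converse, assume $\psi(\Delta)$ is a positive basis of $\insQ^n$. By Corollary \ref{cor:posspan}, $\Inv(\Delta)=\Max(D)$. Suppose $\Delta'\subsetneq\Delta$ with $\Inv(\Delta')=\Max(D)$; then by Corollary \ref{cor:posspan} again $\psi(\Delta')$ positively spans $\insQ^n$. Pick $P\in\Delta\setminus\Delta'$, with $\mathbf{v}:=\psi(P)$. Since $\psi(\Delta')\subseteq\psi(\Delta)\setminus\{\mathbf{v}\}$ — unless $\mathbf{v}\in\psi(\Delta')$, in which case $\psi(\Delta)\setminus\{\mathbf{v}\}=\psi(\Delta)\setminus\psi(\text{primes over }\mathbf{v})$ is a strictly smaller set that nevertheless, together with $\mathbf{v}$ itself lying in the span of $\psi(\Delta')\subseteq\psi(\Delta)\setminus\{\mathbf v\}$'s positive cone wait, let me restructure — since $\psi(\Delta')$ positively spans $\insQ^n$ and $\psi(\Delta')\subseteq\psi(\Delta)$, and $\psi(\Delta)\setminus\{\mathbf{v}\}\supseteq\psi(\Delta')$ whenever $\mathbf{v}\notin\psi(\Delta')$, we would get that $\psi(\Delta)\setminus\{\mathbf{v}\}$ positively spans, contradicting that $\psi(\Delta)$ is a positive basis. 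If instead $\mathbf{v}\in\psi(\Delta')$, then $\Delta'$ already contains a prime over $\mathbf{v}$, and one shows $\psi(\Delta\setminus\{P\})=\psi(\Delta)$ still positively spans while $|\psi(\Delta\setminus\{P\})|$... — here one must argue at the level of the set $\psi(\Delta)$ rather than $\Delta$: the point is that $\psi(\Delta)\setminus\{\mathbf v\}$ contains $\psi(\Delta')$ only when $\mathbf v\notin\psi(\Delta')$, and when $\mathbf v\in\psi(\Delta')$ one instead derives that removing $P$ from $\Delta$ does not change $\psi$ of the set, so $\Delta$ was never minimal as an inverse basis either; reconcile this by noting that an inverse basis, by the forward direction's collision argument, can never contain two primes with the same $\psi$-image. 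Thus $\Delta'$ cannot exist and $\Delta$ is an inverse basis.

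The main obstacle, and the part I expect to require the most care, is precisely the non-injectivity of $\psi$: the clean statement ``inverse basis $\Leftrightarrow$ positive basis'' hides the fact that $\Delta\mapsto\psi(\Delta)$ may collapse several primes to one vector, so ``removing an element of $\Delta$'' and ``removing an element of $\psi(\Delta)$'' are not in bijection. The right way to dispatch this is a preliminary lemma: \emph{if $\Delta$ is an inverse basis, then $\psi$ restricted to $\Delta$ is injective}; equivalently, $\Delta$ never contains two distinct primes $P,P'$ with $\psi(P)=\psi(P')$ — because if it did, $P\wedge P'\in\monprinc(D)$ would already hold (the class of $PP'\cdot(\text{torsion compensation})$... more precisely $[P]-[P']$ is torsion so $P^k(P')^{-k}$ is principal for some $k$, whence $P\wedge P'=\rad(P^a(P')^b)\in\monprinc(D)$ for suitable exponents, once one checks the exponents come out positive), making $P'\in\Inv(\Delta\setminus\{P'\})$ and, more generally, $\Inv(\Delta\setminus\{P'\})=\Max(D)$, contradicting minimality. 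With that lemma in hand both implications reduce directly to Corollary \ref{cor:posspan} applied to $\Delta$ and to $\Delta\setminus\{P\}$, with the bijection $P\leftrightarrow\psi(P)$ now making ``$\psi(\Delta)\setminus\{\psi(P)\}=\psi(\Delta\setminus\{P\})$'' legitimate.
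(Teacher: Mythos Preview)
You have correctly spotted the real subtlety here: $\psi$ need not be injective on $\Delta$, so removing a prime from $\Delta$ and removing a vector from $\psi(\Delta)$ are not the same operation. The paper's own proof is one sentence for each direction and simply does not address this point; your instinct that something is missing is right.

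Your forward direction is correct, and the lemma you isolate at the end --- \emph{if $\Delta$ is an inverse basis then $\psi|_\Delta$ is injective} --- is exactly the right tool. The quickest proof is via Proposition~\ref{prop:InvP} rather than by manipulating witnessing products: if $P\neq P'$ in $\Delta$ satisfy $\psi(P)=\psi(P')$, then $\psi(\Delta\setminus\{P\})=\psi(\Delta)$, hence
\[
\Inv(\Delta\setminus\{P\})=\psi^{-1}\bigl(\negt(\psi(\Delta\setminus\{P\}))\bigr)=\psi^{-1}\bigl(\negt(\psi(\Delta))\bigr)=\Inv(\Delta)=\Max(D),
\]
contradicting minimality. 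With injectivity in hand, removing $P$ from $\Delta$ and removing $\psi(P)$ from $\psi(\Delta)$ match up, and two applications of Corollary~\ref{cor:posspan} finish the forward implication cleanly.

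The converse, however, is \emph{false} as stated, and your attempt to rescue it is circular: you invoke the injectivity lemma to dispose of the case $\mathbf v\in\psi(\Delta')$, but that lemma presupposes that $\Delta$ is an inverse basis, which is precisely what you are trying to establish. A concrete counterexample: take a Dedekind domain with $\Cl(D)\simeq\insZ$ (so $n=1$) containing primes $P,P',Q$ with $[P]=[P']$ and $[Q]=-[P]$ in $\Cl(D)$; the domain $D_1$ of Section~\ref{sect:counterex} will do. For $\Delta=\{P,P',Q\}$ one has $\psi(\Delta)=\{\psi(P),-\psi(P)\}$, a positive basis of $\insQ$, yet $\Delta$ is not an inverse basis since already $\Inv(\{P,Q\})=\Max(D)$ by Corollary~\ref{cor:posspan}. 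So the paper's ``the converse follows in the same way'' is not justified either.

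What survives is the forward implication together with the injectivity lemma, and that is all the paper actually needs downstream: in Theorem~\ref{teor:rank} one starts from an inverse basis $\Delta$, passes to the positive basis $\psi(\Delta)$ via Proposition~\ref{prop:corrispreay} and Proposition~\ref{prop:reaypart}, and tacitly uses $|\psi(\Delta)|=|\Delta|$ (and that the $\psi(\Delta_i)$ form a genuine partition), which is exactly injectivity of $\psi|_\Delta$. Your diagnosis of the obstacle was on target; the resolution is not to repair the converse but to recognise that it does not hold.
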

\begin{proof}
If $\Delta$ is an inverse basis, then $\psi(\Delta)$ positively spans $\insQ^n$ by Corollary \ref{cor:posspan}, while $\psi(\Delta')$ does not for every $\Delta'\subsetneq\Delta$ (again by the corollary). Hence, $\psi(\Delta)$ is a positive basis. The converse follows in the same way.
\end{proof}

Given a positive basis $X$ of $\insQ^n$, we call a partition $\{X_1,\ldots,X_s\}$ of $X$ a \emph{weak Reay partition} if, for every $j$, the positive cone of $X_1\cup\cdots\cup X_i$ is a linear subspace of $\insQ^n$. The following is a variant of \cite[Theorem 2]{reay65}.
\begin{prop}\label{prop:reaypart}
Let $X$ be a positive basis of $\insQ^n$. Then:
\begin{enumerate}[(a)]
\item\label{prop:reaypart:leq} every weak Reay partition of $X$ has cardinality at most $|X|-n$;
\item\label{prop:reaypart:ex} there is a weak Reay partition of cardinality $|X|-n$.
\end{enumerate}
\end{prop}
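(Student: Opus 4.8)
The plan is to prove the two parts separately, working throughout in the quotient spaces $\insQ^n/V_i$, where $V_i:=\pos(X_1\cup\cdots\cup X_i)$ and $V_0:=\{0\}$. Two consequences of $X$ being a positive basis are used repeatedly: $0\notin X$ (otherwise $\pos(X\setminus\{0\})=\pos(X)=\insQ^n$, contradicting minimality), and, more generally, if $S\subseteq X$ is such that $\pos(S)$ is a linear subspace then $X\cap\pos(S)=S$ — for any $x\in X$ with $x\in\pos(S)\setminus S$ would satisfy $x\in\pos(S)\subseteq\pos(X\setminus\{x\})$, again forcing $\pos(X\setminus\{x\})=\insQ^n$.

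For part (\ref{prop:reaypart:leq}), let $\{X_1,\dots,X_s\}$ be a weak Reay partition. Applying the remark above with $S=X_1\cup\cdots\cup X_{i-1}$ shows that no element of $X_i$ lies in $V_{i-1}$; hence $V_i\supsetneq V_{i-1}$, and the image $\overline{X_i}$ of $X_i$ in $\insQ^n/V_{i-1}$ is contained in $(\insQ^n/V_{i-1})\setminus\{0\}$. Moreover $\overline{X_i}$ positively spans $V_i/V_{i-1}$, because $X_1\cup\cdots\cup X_i$ positively spans $V_i$ while $X_1\cup\cdots\cup X_{i-1}\subseteq V_{i-1}$. Since a finite subset of $\insQ^k\setminus\{0\}$ (with $k\ge1$) that positively spans $\insQ^k$ has at least $k+1$ elements — it spans linearly, and exactly $k$ of them would be a basis, whose negatives are not in the positive cone — we get $|X_i|\ge|\overline{X_i}|\ge\dim V_i-\dim V_{i-1}+1$; summing over $i=1,\dots,s$ gives $|X|\ge n+s$, which is (\ref{prop:reaypart:leq}).

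For part (\ref{prop:reaypart:ex}) I would build the partition greedily, maintaining the invariant that $\pos(X_1\cup\cdots\cup X_i)$ equals the subspace $V_i$ and that $X\cap V_i=X_1\cup\cdots\cup X_i$ (true at $i=0$ since $0\notin X$). While $V_i\subsetneq\insQ^n$: by the invariant, the image of $X$ in $\insQ^n/V_i$ is $\{0\}\cup\overline{X\setminus V_i}$, and the map $X\setminus V_i\to\insQ^n/V_i$ is injective with image avoiding $0$; indeed if $x\neq x'$ lay in $X\setminus V_i$ with the same image, then $x-x'\in V_i=\pos(X_1\cup\cdots\cup X_i)$ would be a nonnegative combination of $X_1\cup\cdots\cup X_i$, whence $x\in\pos(\{x'\}\cup X_1\cup\cdots\cup X_i)\subseteq\pos(X\setminus\{x\})$, a contradiction. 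Now $\overline{X\setminus V_i}$ positively spans the nonzero space $\insQ^n/V_i$, so (summing the relations $-\overline y\in\pos(\overline{X\setminus V_i})$) the zero vector is a strictly positive combination of its elements; choose a minimal nonempty subset $\overline Z$ admitting a relation $\sum_j\nu_j\overline z_j=0$ with all $\nu_j>0$. A perturbation argument shows $\overline Z$ is a \emph{circuit}: were some $\overline Z\setminus\{\overline z_0\}$ linearly dependent, say $\sum_j c_j\overline z_j=0$ nontrivially, then perturbing the coefficients $\nu_j$ to $\nu_j+tc_j$ and letting $|t|$ increase until some coefficient $\nu_{j_1}+tc_{j_1}$ first vanishes would produce a strictly positive relation on a proper nonempty subset still containing $\overline z_0$, against minimality. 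Hence $|\overline Z|=\dim\langle\overline Z\rangle+1$ and $\pos(\overline Z)=\langle\overline Z\rangle$, where $\langle\overline Z\rangle$ denotes the linear span. Taking $X_{i+1}\subseteq X\setminus V_i$ to be the (unique, by injectivity) preimage of $\overline Z$ and setting $V_{i+1}:=V_i+\langle X_{i+1}\rangle$, one checks that $\pos(X_1\cup\cdots\cup X_{i+1})=V_{i+1}$ (it lies in the span, and contains $V_i+\pos(X_{i+1})$, whose image modulo $V_i$ is $\pos(\overline Z)=\langle\overline Z\rangle$), that $V_{i+1}\supsetneq V_i$ with $\dim V_{i+1}-\dim V_i=|X_{i+1}|-1$, and that the invariant is restored. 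The process halts with some $V_s=\insQ^n$, and then $X\cap V_s=X$, so $\{X_1,\dots,X_s\}$ is a genuine partition and, by construction, a weak Reay partition; summing $|X_i|=\dim V_i-\dim V_{i-1}+1$ gives $|X|=n+s$, i.e.\ $s=|X|-n$.

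The step I expect to be the main obstacle is the circuit lemma together with the bookkeeping invariant $X\cap V_i=X_1\cup\cdots\cup X_i$: it is exactly this invariant — a consequence of the minimality built into the notion of positive basis — that prevents any element of $X$ from being lost when passing to the successive quotients and that forces each peeled-off block to have precisely $\dim V_{i+1}-\dim V_i+1$ elements, so that the counting in parts (\ref{prop:reaypart:leq}) and (\ref{prop:reaypart:ex}) agree. Everything else reduces to routine manipulation of positive cones, and the argument as a whole is a rational reformulation of Reay's theorem on the structure of positive bases.
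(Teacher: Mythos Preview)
Your proof is correct. For part~\ref{prop:reaypart:leq} you argue essentially as the paper does: both proofs establish $|X_i|\ge\dim V_i-\dim V_{i-1}+1$ and sum, the only difference being that you pass to the quotient $\insQ^n/V_{i-1}$ and invoke the ``$k+1$ vectors needed to positively span $\insQ^k$'' fact, while the paper stays in $\insQ^n$ and writes the single relation $-(1+\lambda_t)z_t=y+\lambda_1z_1+\cdots+\lambda_{t-1}z_{t-1}$ directly. Your route additionally uses the minimality of $X$ (via the lemma $X\cap\pos(S)=S$) to ensure $\overline{X_i}$ avoids zero; the paper's argument does not need this, but since the statement is only about positive bases this costs nothing.

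The genuine divergence is in part~\ref{prop:reaypart:ex}: the paper simply cites Reay's theorem \cite[Theorem~2]{reay65}, whereas you give a self-contained greedy construction, peeling off minimal positively dependent subsets (circuits) in successive quotients and using the invariant $X\cap V_i=X_1\cup\cdots\cup X_i$ to keep the bookkeeping honest. Your circuit-extraction argument is sound---the perturbation step correctly shows that a minimal positively dependent set has every proper subset independent, hence has exactly $\dim\langle\overline Z\rangle+1$ elements and positive cone equal to its span---and the invariant is indeed restored at each stage by the same minimality lemma you isolated at the outset. What you gain is a proof that does not depend on an external reference; what the paper gains is brevity. Both are valid, and your version is in effect a reproof of the relevant part of Reay's result over $\insQ$.
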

\begin{proof}
Let $\{X_1,\ldots,X_s\}$ be a weak Reay partition, and let $V_i$ be the linear space spanned by $X_1,\ldots,X_i$ (with $V_0:=(0)$). We claim that $\dim V_i-\dim V_{i-1}\leq |X_i|-1$. Indeed, let $X_i:=\{z_1,\ldots,z_t\}$: then, $-z_t$ belongs to the positive cone generated by $V_{i-1}$ and $X_i$, and thus we can write $-z_t=y+\sum_j\lambda_jz_j$ for some $y\in V_{i-1}$ and $\lambda_j\geq 0$. Thus, $-(1+\lambda_t)z_t=y+\lambda_1z_1+\cdots+\lambda_{t-1}z_{t-1}$, and since $\lambda_t\neq-1$ we have that $z_t$ is linearly dependent from $X_1\cup\cdots\cup X_{i-1}\cup\{z_1,\ldots,z_{t-1}\}$. Hence, $\dim V_i\leq\dim V_{i-1}+t-1$, as claimed.

Therefore, 
\begin{equation*}
\begin{aligned}
n=\dim\insQ^n= & (\dim V_s-\dim V_{s-1})+\cdots+\dim V_1\leq\\
\leq & (|X_s|-1)+\cdots+(|X_1|-1)=|X|-s,
\end{aligned}
\end{equation*}
and thus $s\leq |X|-n$, and \ref{prop:reaypart:leq} is proved. \ref{prop:reaypart:ex} is a direct consequence of \cite[Theorem 2]{reay65}.
\end{proof}

Similarly, if $\Delta\subseteq\Max(D)$ is an inverse basis of $\Max(D)$, we call a partition $\{\Delta_1,\ldots,\Delta_s\}$ a \emph{weak Reay partition} if $\Delta_1\cup\cdots\cup\Delta_i\subseteq\Inv(\Delta_1\cup\cdots\cup\Delta_i)$ for every $i$. 
\begin{prop}\label{prop:corrispreay}
Let $\Delta\subseteq\Max(D)$ be an inverse basis of $\Max(D)$, and let $\{\Delta_1,\ldots,\Delta_s\}$ be a partition of $\Delta$. Then, $\{\Delta_1,\ldots,\Delta_s\}$ is a weak Reay partition of $\Delta$ if and only if $\{\psi(\Delta_1),\ldots,\psi(\Delta_s)\}$ is a weak Reay partition of $\psi(\Delta)$.
\end{prop}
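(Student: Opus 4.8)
The plan is to reduce the claimed equivalence to a single statement applied termwise along the partial unions $\Gamma_i := \Delta_1\cup\cdots\cup\Delta_i$ of the partition, namely that $\Gamma_i\subseteq\Inv(\Gamma_i)$ if and only if $\pos(\psi(\Gamma_i))$ is a linear subspace of $\insQ^n$; this is essentially Proposition \ref{prop:inv-linsub} read through the identity $\negt(S)=-\pos(S)$.

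First I would dispose of a bookkeeping point: since $\Delta$ is an inverse basis, the restriction $\psi|_\Delta$ is injective. Indeed, if $\psi(P)=\psi(P')$ for two distinct $P,P'\in\Delta$, then $\psi(\Delta\setminus\{P'\})=\psi(\Delta)$ as subsets of $\insQ^n$, so $\psi(\Delta\setminus\{P'\})$ still positively spans $\insQ^n$ and hence $\Inv(\Delta\setminus\{P'\})=\Max(D)$ by Corollary \ref{cor:posspan}, contradicting the minimality in the definition of inverse basis. Consequently $\{\psi(\Delta_1),\ldots,\psi(\Delta_s)\}$ is genuinely a partition of $\psi(\Delta)$ into pairwise disjoint nonempty blocks, and $\psi(\Delta)$ is a positive basis of $\insQ^n$ by Proposition \ref{prop:invbasis}, so the right-hand side of the stated equivalence is well-posed. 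Note also that $\psi(\Gamma_i)=\psi(\Delta_1)\cup\cdots\cup\psi(\Delta_i)$ for every $i$.

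Then I would unwind the two notions of weak Reay partition. By definition, $\{\Delta_1,\ldots,\Delta_s\}$ is a weak Reay partition of $\Delta$ exactly when $\Gamma_i\subseteq\Inv(\Gamma_i)$ for all $i$, and $\{\psi(\Delta_1),\ldots,\psi(\Delta_s)\}$ is a weak Reay partition of $\psi(\Delta)$ exactly when $\pos(\psi(\Gamma_i))$ is a linear subspace of $\insQ^n$ for all $i$. Fixing $i$ and applying Proposition \ref{prop:inv-linsub} with $\Gamma_i$ in place of $\Delta$ gives that $\Gamma_i\subseteq\Inv(\Gamma_i)$ if and only if $\negt(\psi(\Gamma_i))$ is a linear subspace; since $\negt(\psi(\Gamma_i))=-\pos(\psi(\Gamma_i))$ and a subset $C$ of a $\insQ$-vector space is a linear subspace if and only if $-C$ is, this is equivalent to $\pos(\psi(\Gamma_i))$ being a linear subspace. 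Taking the conjunction over $i=1,\ldots,s$ yields the equivalence in both directions. I do not anticipate a real obstacle here: the argument is pure bookkeeping on top of Propositions \ref{prop:inv-linsub} and \ref{prop:invbasis} and Corollary \ref{cor:posspan}, and the only point requiring a little care is the injectivity of $\psi|_\Delta$, which is exactly what makes the images of the blocks an honest partition.
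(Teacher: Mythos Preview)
Your proof is correct and follows essentially the same route as the paper: apply Proposition~\ref{prop:inv-linsub} to each partial union $\Gamma_i$ and read off the equivalence from the definitions. You are in fact a bit more careful than the paper, explicitly checking the injectivity of $\psi|_\Delta$ (so that the images of the blocks form a genuine partition) and spelling out the passage between $\negt(\psi(\Gamma_i))$ and $\pos(\psi(\Gamma_i))$, both of which the paper leaves implicit.
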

\begin{proof}
By Proposition \ref{prop:inv-linsub}, $\Delta_1\cup\cdots\cup\Delta_i\subseteq\Inv(\Delta_1\cup\cdots\cup\Delta_i)$ if and only if the positive cone of $\psi(\Delta_1\cup\cdots\cup\Delta_i)=\psi(\Delta_1)\cup\cdots\cup\psi(\Delta_i)$ is a linear subspace of $\insQ^n$. The claim now follows from the definition.
\end{proof}

\begin{teor}\label{teor:rank}
Let $D,D'$ be Dedekind domains such that $\monprinc(D)$ and $\monprinc(D')$ are isomorphic. Then, $\rk\Cl(D)=\rk\Cl(D')$.
\end{teor}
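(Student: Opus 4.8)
The plan is to read $\rk\Cl(D)$ off from the semilattice $\monprinc(D)$ using the apparatus of Section \ref{sect:ordini}, and then to observe that an order isomorphism $\monprinc(D)\to\monprinc(D')$ transports all the data involved. First I would fix such an isomorphism and, by Theorem \ref{teor:princ->rad}, extend it to an order isomorphism $\Phi\colon\monrad(D)\to\monrad(D')$. Since $\Phi$ is an isomorphism of meet-semilattices it commutes with finite meets, and since the maximal ideals of a Dedekind domain are exactly the maximal elements of $(\monrad(D),\subseteq)$, the map $\Phi$ restricts to a bijection $\eta\colon\Max(D)\to\Max(D')$. The key observation is then that $\eta$ transports the notion of almost inverse, i.e.\ that $\eta(\Inv(\Delta))=\Inv(\eta(\Delta))$ for every $\Delta\subseteq\Max(D)$. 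This is immediate from Definition \ref{def:InvP}: the defining condition ``$Q\wedge P_1\wedge\cdots\wedge P_k\in\monprinc(D)$'' (the empty family meaning simply $Q\in\monprinc(D)$) refers only to the meet operation and to membership in $\monprinc(D)$, and $\Phi$ preserves both --- it commutes with meets and, being an extension of the given isomorphism, it carries $\monprinc(D)$ onto $\monprinc(D')$.

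Granting this, everything built on $\Inv$ transfers along $\eta$: a set $\Delta$ is an inverse basis of $\Max(D)$ if and only if $\eta(\Delta)$ is an inverse basis of $\Max(D')$; a partition $\{\Delta_1,\ldots,\Delta_s\}$ of $\Delta$ is a weak Reay partition if and only if $\{\eta(\Delta_1),\ldots,\eta(\Delta_s)\}$ is; and the existence of a \emph{finite} $\Delta$ with $\Inv(\Delta)=\Max(D)$ is preserved. By Proposition \ref{prop:finbasis}, this last point says that $\rk\Cl(D)<\infty$ if and only if $\rk\Cl(D')<\infty$; so if either rank is infinite then both are, and in that case there is nothing more to prove. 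From here on I would assume $n:=\rk\Cl(D)<\infty$ and $n':=\rk\Cl(D')<\infty$, and identify $\Cl(D)\otimes\insQ$ with $\insQ^n$ and $\Cl(D')\otimes\insQ$ with $\insQ^{n'}$.

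Next I would fix an inverse basis $\Delta$ of $\Max(D)$; one exists and is necessarily finite, obtained by shrinking a finite set $\Delta_0$ with $\Inv(\Delta_0)=\Max(D)$ (given by Proposition \ref{prop:finbasis}) to a minimal such subset. Let $m$ denote the largest number of parts occurring in a weak Reay partition of $\Delta$. Combining Propositions \ref{prop:invbasis}, \ref{prop:reaypart} and \ref{prop:corrispreay} then yields the intrinsic formula
\begin{equation*}
n=\rk\Cl(D)=|\Delta|-m.
\end{equation*}
The one point to verify along this chain is that the image of $\Delta$ in $\Cl(D)\otimes\insQ$ has exactly $|\Delta|$ elements: two distinct elements of $\Delta$ with equal image could be reduced to one without destroying the equality $\Inv(\Delta)=\Max(D)$ (by Corollary \ref{cor:posspan}), contradicting the minimality of $\Delta$. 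Finally, $\eta(\Delta)$ is an inverse basis of $\Max(D')$ with $|\eta(\Delta)|=|\Delta|$, and $\eta$ induces a bijection between the weak Reay partitions of $\Delta$ and those of $\eta(\Delta)$ preserving the number of parts; so the largest number of parts in a weak Reay partition of $\eta(\Delta)$ is again $m$, and the same formula applied to $D'$ gives $n'=|\eta(\Delta)|-m=|\Delta|-m=n$, as desired.

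I expect the delicate step to be the first one: establishing that an order isomorphism of the semilattices $\monprinc$ genuinely ``sees'' the individual maximal ideals and the relation defining $\Inv(\Delta)$. This is precisely where Theorem \ref{teor:princ->rad} does the work --- it lets one pass to $\monrad$, in which the maximal ideals are the maximal elements and the meets appearing in Definition \ref{def:InvP} are available --- together with the elementary fact that an order isomorphism of meet-semilattices commutes with finite meets. Everything afterwards is bookkeeping layered on top of the Reay-type counting already carried out in Propositions \ref{prop:reaypart} and \ref{prop:corrispreay}.
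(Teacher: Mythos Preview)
Your proposal is correct and follows essentially the same route as the paper: extend to $\monrad$ via Theorem \ref{teor:princ->rad}, observe that $\Inv$ is defined purely in terms of meets and membership in $\monprinc$ and is therefore preserved, handle the infinite-rank case by Proposition \ref{prop:finbasis}, and in the finite-rank case read off the rank as $|\Delta|-m$ from an inverse basis $\Delta$ and its maximal weak Reay partition via Propositions \ref{prop:invbasis}, \ref{prop:reaypart} and \ref{prop:corrispreay}. Your explicit check that $\psi$ is injective on an inverse basis (so that $|\psi(\Delta)|=|\Delta|$) is a point the paper leaves implicit; it is needed for the counting to go through and your argument for it is correct.
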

\begin{proof}
Let $\phi:\monprinc(D)\longrightarrow\monprinc(D')$ be an isomorphism; by Theorem \ref{teor:princ->rad}, we can find an isomorphism $\Phi:\monrad(D)\longrightarrow\monrad(D')$ sending $\monprinc(D)$ to $\monprinc(D')$. In particular, $\Phi(\Max(D))=\Max(D')$.

Since $\Inv(\Delta)$ is defined only through $\monprinc(D)$ and $\monrad(D)$, $\Phi$ respects the inverse construction, in the sense that $\Phi(\Inv(\Delta))=\Inv(\Phi(\Delta))$ for every $\Delta\subseteq\Max(D)$. In particular, $\Inv(\Delta)=\Max(D)$ if and only if $\Inv(\Phi(\Delta))=\Max(D')$; by Proposition \ref{prop:finbasis}, it follows that $\rk\Cl(D)=\infty$ if and only if $\rk\Cl(D')=\infty$. 

Suppose now that the two ranks are finite, say equal to $n$ and $n'$ respectively. Let $\Delta\subseteq\Max(D)$ be an inverse basis of $\Max(D)$. Let $\{\Delta_1,\ldots,\Delta_s\}$ be a weak Reay partition of $\Delta$ of maximum cardinality; by Propositions \ref{prop:corrispreay} and \ref{prop:reaypart}, $s=|\Delta|-n$.

Every weak Reay partition of $\Delta$ gets mapped by $\Phi$ into a weak Reay partition of $\Delta':=\psi(\Delta)$, and conversely; therefore, the maximum cardinality of the weak Reay partitions of $\Delta'$ is again $|\Delta|-n$. However, applying Propositions \ref{prop:corrispreay} and \ref{prop:reaypart} to $\Delta'$ we see that this quantity is $|\Delta'|-n'$; since $|\Delta|=|\Delta'|$, we get $n=n'$, as claimed.
\end{proof}

\begin{cor}\label{cor:rank-fg}
Let $D,D'$ be Dedekind domains, and let $T(D)$ (respectively, $T(D')$) be the torsion subgroup of $\Cl(D)$ (resp., $\Cl(D')$). If $\monprinc(D)$ and $\monprinc(D')$ are isomorphic and if $\Cl(D)$ and $\Cl(D')$ are finitely generated, then $\Cl(D)/T(D)\simeq\Cl(D')/T(D')$.
\end{cor}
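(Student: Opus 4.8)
The plan is to combine Theorem~\ref{teor:rank} with the structure theorem for finitely generated abelian groups. First I would apply Theorem~\ref{teor:rank}, whose hypotheses are exactly those assumed here, to obtain $\rk\Cl(D)=\rk\Cl(D')$; denote this common value by $n$. Since $\Cl(D)$ is a finitely generated abelian group, the structure theorem yields a decomposition $\Cl(D)\simeq T(D)\oplus\mathbb{Z}^{n}$, in which the torsion part is $T(D)$ and the free part has rank $n=\rk\Cl(D)$; passing to the quotient gives $\Cl(D)/T(D)\simeq\mathbb{Z}^{n}$. The identical argument applied to $D'$ gives $\Cl(D')/T(D')\simeq\mathbb{Z}^{n}$, and composing these two isomorphisms (one of them inverted) yields $\Cl(D)/T(D)\simeq\Cl(D')/T(D')$, which is the assertion.

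I do not expect any real obstacle: the only nontrivial ingredient is Theorem~\ref{teor:rank}, and everything else is the standard classification of finitely generated abelian groups. It is worth pointing out where the finite-generation hypothesis enters, since this is the only subtlety. What Theorem~\ref{teor:rank} provides on its own is merely that $\Cl(D)/T(D)$ and $\Cl(D')/T(D')$ are torsion-free abelian groups of the same rank; such groups need not be isomorphic in general (for instance $\mathbb{Z}$ and $\mathbb{Q}$ both have rank $1$). The assumption that $\Cl(D)$ and $\Cl(D')$ are finitely generated is exactly what upgrades ``same rank'' to ``isomorphic'', because a finitely generated torsion-free abelian group of rank $n$ is forced to be free of rank $n$. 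Thus the corollary is a direct specialization of the main theorem, and I would present it as such.
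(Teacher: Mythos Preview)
Your proof is correct and matches the paper's own argument essentially line for line: apply Theorem~\ref{teor:rank} to get equal ranks, then use the structure theorem for finitely generated abelian groups to conclude that both quotients are free of that common rank. Your added remark explaining why finite generation is needed (the $\mathbb{Z}$ versus $\mathbb{Q}$ example) is a nice touch that the paper omits.
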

\begin{proof}
Since $\Cl(D)$ is finitely generated, it has finite rank $n$ and $\Cl(D)/T(D)\simeq\insZ^n$; analogously, $\Cl(D')/T(D')\simeq\insZ^m$, where $m:=\rk\Cl(D')$. By Theorem \ref{teor:rank}, $n=m$, and in particular $\Cl(D)/T(D)\simeq\Cl(D')/T(D')$.
\end{proof}

\section{Counterexamples}\label{sect:counterex}
In this section, we collect some examples showing that Theorem \ref{teor:rank} is, in many ways, the best possible.

\begin{ex}
It is not possible to improve the conclusion of Theorem \ref{teor:rank} from ``$\rk\Cl(D)=\rk\Cl(D')$'' to ``$\Cl(D)\simeq\Cl(D')$''. Indeed, if $\rk\Cl(D)=0$ (i.e., if $\Cl(D)$ is torsion) then $\monprinc(D)=\monrad(D)$, and thus whenever $\rk\Cl(D)=\rk\Cl(D')=0$ the posets $\monprinc(D)$ and $\monprinc(D')$ are isomorphic.
\end{ex}

For the next examples, we need to use a construction of Claborn \cite{claborn-relations}.

Let $G:=\sum_ix_i\insZ$ be the free abelian group on the countable set $\{x_i\}_{i\inN}$. Let $I$ be a subset of $G$ satisfying the following two properties:
\begin{itemize}
\item all coefficients of the elements of $I$ (with respect to the $x_i$) are nonnegative;
\item for every finite set $x_{i_1},\ldots,x_{i_k}$ and every $n_1,\ldots,n_k\inN$ there is an element $y$ of $I$ such that the component of $y$ relative to $x_{i_t}$ is $n_t$.
\end{itemize}
Then, \cite[Theorem 2.1]{claborn-relations} says that there is an integral domain $D$ with countably many maximal ideals $\{P_i\}_{i\inN}$ such that the map sending the ideal $P_1^{n_1}\cdots P_k^{n_k}$ to $n_1x_1+\cdots+n_kx_k$ sends principal ideals to elements of the subgroup $H$ generated by $I$. In particular, $\Cl(D)\simeq G/H$.

\begin{ex}
Corollary \ref{cor:rank-fg} does not hold without the hypothesis that $\Cl(D)$ and $\Cl(D')$ are finitely generated.

For example, let $H_1$ be the subgroup of $G$ generated by $x_n+x_{n+1}$, as $n$ ranges in $\insN$, and $I_1$ to be the subset of the elements of $H_1$ having all coefficients nonnegative. Then, $I_1$ satisfies the above conditions; the corresponding domain $D_1$ has a class group isomorphic to $\insZ$, and its prime ideals are concentrated in two classes: if $n$ is even $P_n$ is equivalent to $P_0$, if $n$ is odd $P_n$ is equivalent to $P_1$, and $P_0P_1$ is principal. (This is exactly Example 3-2 of \cite{claborn-relations}.) In particular, $\monprinc(D_1)$ is equal to the members of $\monrad(D_1)$ that are contained both in some $P_n$ with $n$ even and in some $P_m$ with $m$ odd.

Let now $H_2$ to be the subgroup of $G$ generated by $x_n+2x_{n+1}$, as $n$ ranges in $\insN$, and let $I_2$ be the subset of the elements of $H_2$ having all coefficients nonnegative. Then, $I_2$ too satisfies the condition above. Let $D_2$ be the corresponding Dedekind domain. Then, $\Cl(D_2)$ is isomorphic to the quotient $G/H_2$, which is isomorphic to the subgroup $\insZ(2^\infty)$ of $\insQ$ generated by $1,\inv{2},\inv{4},\ldots,\inv{2^n},\ldots$ (that is, to the Pr\"ufer $2$-group): this can be seen by noting that the map
\begin{equation*}
\begin{aligned}
G & \longrightarrow \insQ,\\
P_n & \longmapsto (-1)^n\inv{2^n}
\end{aligned}
\end{equation*}
is a group homomorphism with kernel $H_2$ and range $\insZ(2^\infty)$. In this isomorphism, the prime ideals $Q_n$ with $n$ even are mapped to positive elements of $\insZ(2^\infty)$, while the prime ideals $Q_m$ with $m$ odd are mapped to the negative elements. Hence, $\monprinc(D_2)$ is equal to the member of $\monrad(D_2)$ that are contained in both an ``even'' and an ``odd'' prime. 

Therefore, the map $\monrad(D_1)\longrightarrow\monrad(D_2)$ sending $P_{i_1}\cap\cdots\cap P_{i_k}$ to $Q_{i_1}\cap\cdots\cap Q_{i_k}$ is an isomorphism sending $\monprinc(D_1)$ to $\monprinc(D_2)$. However, the class groups of $D_1$ and $D_2$ are both torsionfree (i.e., $T(D_1)=T(D_2)=0$) but not isomorphic.
\end{ex}

\begin{ex}
The converse of Theorem \ref{teor:rank} does not hold; that is, it is possible that $\rk\Cl(D)=\rk\Cl(D')$ even if $\monprinc(D)$ and $\monprinc(D')$ are not isomorphic.

Take $H_1$ and $D_1$ as in the previous example.

Take $H_3$ to the the subgroup of $G$ generated by $x_0$ and by $x_n+x_{n+1}$ for $n>0$, and let $I_3$ be the subset of the elements of $H_3$ having all coefficients nonnegative. Then, $I_3$ satisfies Claborn's conditions, and the corresponding domain $D_3$ satisfies $\Cl(D_3)\simeq\insZ$ (in particular, $\rk\Cl(D_3)=1$), so $\Cl(D_1)$ and $\Cl(D_3)$ are isomorphic. 

However, $D_3$ has a principal maximal ideal (the one corresponding to $x_0$), while $D_1$ does not. Therefore, there is no isomorphism $\monrad(D_1)\longrightarrow\monrad(D_3)$ sending $\monprinc(D_1)$ to $\monprinc(D_3)$; by Theorem \ref{teor:princ->rad}, it follows that $\monprinc(D_1)$ and $\monprinc(D_3)$ cannot be isomorphic.
\end{ex}

\bibliographystyle{plain}
\bibliography{/bib/articoli,/bib/libri,/bib/miei}

\begin{thebibliography}{1}

\bibitem{atiyah}
M.~F. Atiyah and I.~G. Macdonald.
\newblock {\em Introduction to {C}ommutative {A}lgebra}.
\newblock Addison-Wesley Publishing Co., Reading, Mass.-London-Don Mills, Ont.,
  1969.

\bibitem{bourbaki_ac}
Nicolas Bourbaki.
\newblock {\em Commutative {A}lgebra. {C}hapters 1--7}.
\newblock Elements of Mathematics (Berlin). Springer-Verlag, Berlin, 1989.
\newblock Translated from the French, Reprint of the 1972 edition.

\bibitem{claborn-relations}
Luther Claborn.
\newblock Specified relations in the ideal group.
\newblock {\em Michigan Math. J.}, 15:249--255, 1968.

\bibitem{davis-poslindep}
Chandler Davis.
\newblock Theory of positive linear dependence.
\newblock {\em Amer. J. Math.}, 76:733--746, 1954.

\bibitem{ger-HK}
Alfred Geroldinger and Franz Halter-Koch.
\newblock {\em Non-unique factorizations}, volume 278 of {\em Pure and Applied
  Mathematics (Boca Raton)}.
\newblock Chapman \& Hall/CRC, Boca Raton, FL, 2006.
\newblock Algebraic, combinatorial and analytic theory.

\bibitem{gilmer_qr}
Robert Gilmer and Jack Ohm.
\newblock Integral domains with quotient overrings.
\newblock {\em Math. Ann.}, 153:97--103, 1964.

\bibitem{neukirch}
J\"urgen Neukirch.
\newblock {\em Algebraic {N}umber {T}heory}, volume 322 of {\em Grundlehren der
  Mathematischen Wissenschaften [Fundamental Principles of Mathematical
  Sciences]}.
\newblock Springer-Verlag, Berlin, 1999.
\newblock Translated from the 1992 German original and with a note by Norbert
  Schappacher, With a foreword by G. Harder.

\bibitem{reay65}
John~R. Reay.
\newblock A new proof of the {B}onice-{K}lee theorem.
\newblock {\em Proc. Amer. Math. Soc.}, 16:585--587, 1965.

\bibitem{golomb-almcyc}
Dario Spirito.
\newblock The {G}olomb topology on a {D}edekind domain and the group of units
  of its quotients.
\newblock {\em Topology and its Applications}, 273:107101, 2020.

\end{thebibliography}
\end{document}